\documentclass{svjour3}
\usepackage{amssymb}
\usepackage{multirow}
\usepackage{rotating}
\usepackage{cite}
\usepackage{lscape}
\usepackage{caption}
\usepackage{amsmath}
\usepackage{graphicx}
\usepackage{color}

\font\transp=msbm10 scaled\magstep 1 
scaled\magstep 0
\def\transpR{\transp\char'122}
\def\nameuse#1{\csname#1\endcsname}

\def\Rmv[#1]{\nameuse{@ifnextchar} [{\Rmat{#1}}{\Rvec{#1}}}
\def\Rvec#1{\mbox{\transpR}^{#1}}
\def\Rmat#1[#2]{\mbox{\transpR}^{{#1} \times {#2}}}
\def\@begintheorem#1#2{\par\bgroup{\bf #1\ #2. }\it\ignorespaces}
\def\@opargbegintheorem#1#2#3{\par\bgroup{\bf #1\ #2\ (#3). }\it\ignorespaces}
\def\@endtheorem{\egroup}
\begin{document}
\title{ A Fast Eigenvalue Approach for Solving the Trust Region Subproblem with an Additional Linear Inequality }
\titlerunning{}
\author{M. Salahi and A.Taati}
\institute{M. Salahi \and A. Taati \at
              Faculty of Mathematical Sciences, University of Guilan, Rasht, Iran \\
              Tel.: +98-133333901\\
              Fax: +98-1333333509\\
              \email{salahim@guilan.ac.ir, akramtaati@yahoo.com}  }         

\maketitle
\hspace{-1cm}\rule{\textwidth}{0.2mm}


\begin{abstract}
In this paper,  we study the extended trust region subproblem (eTRS) in which the trust region intersects the unit ball with a single linear inequality constraint. By reformulating the Lagrangian dual of eTRS as a two-parameter linear eigenvalue problem, we state a necessary and sufficient condition for its strong duality  in terms of an  optimal solution of a linearly constrained bivariate concave maximization problem. This results in  an efficient algorithm for solving   eTRS of large size whenever the strong duality is detected.  Finally, some numerical experiments are given to show the effectiveness of the proposed method.
\end{abstract}\\ \\
{\bf Keywords:} Extended Trust region subproblem; Global optimization; Eigenvalue; Semidefinite optimization.

\section{Introduction}
Consider the following extended trust region subproblem (eTRS)
\begin{align}
p^{*}:=\min \quad &x^TAx -2a^Tx \notag \\
&||x||^2\leq \Delta, \tag{eTRS}\\
&b^Tx\leq c, \notag
\end{align}
where $A\in \mathbb{R}^{n\times n}$ is a symmetric matrix  but not  positive definite,  $a, b\in \mathbb{R}^n$ and $ c, \Delta \in \mathbb{R}$ with $\Delta>0$.  We further assume that  eTRS satisfies the Slater condition, i.e., $\exists  \hat{x}$ with $||\hat{x}||^2<\Delta $ and $b^T\hat{x}< c$. Such model problems  appear in many contexts such as constrained optimization,  when the trust region method is applied to solve nonlinear programming problems with linear inequality constraints \cite{1}, robust optimization under matrix norm or polyhedral uncertainty \cite{4,5,6}, nonlinear optimization problems with discrete variables \cite{2,3},  optimal control and system theory \cite{7,8}.

When $(b,c)=(0,0)$, eTRS reduces to  the classical  trust region subproblem (TRS)
 which plays a cardinal role in the trust region methods for unconstrained optimization \cite{1}. Though  TRS is explicitly nonconvex as $A$ is not necessarily positive semidefinite, it enjoys many powerfull features and can be solved efficiently in practise \cite{9,10,11,12}.  In particular, it admits no  gap with its dual problem and enjoys exact  semidefinite  optimization (SDO) relaxation \cite{9,12,13}. It is well known that generally these properties can not be extended to eTRS. Precisely, the SDO-relaxation  of eTRS is not always  tight \cite{14,15}.  However, it has been shown that the optimal value of eTRS can  be computed via solving a mixed SOCO\footnote{Second order cone optimization}/SDO problem and thus is polynomially solvable \cite{3,15}. Moreover, it has been shown that  the strong duality holds for eTRS under some sufficient conditions. For example, in \cite{14}, the authors have studied  the problem of minimizing an indefinite quadratic function subject to two quadratic inequality constraints and  have shown that the following dimension condition
\begin{align*}\label{dc}
\text{dim}\, \, \text{Null}(A-\lambda_{\text{min}}(A)I_n)\geq 2, \tag{DC}
\end{align*}
together with the Slater condition  is sufficient for zero duality gap  and tightness of the SDO-relaxation of eTRS.
Recently, the eTRS  in which the trust region intersects the unit ball with  a fixed number $m$ linear inequalities, $ b_i^Tx\leq c_i, i=1,.., m$, has received much attention  in the literature \cite{16,17,18}. For example, Jeyakumar and Li in \cite{16} have  proved  that the SDO-relaxation is exact without the Slater constraint qualification whenever an extension of dimension condition (DC) as \begin{align*}\label{newdc}
\text{dim}\, \, \text{Null}(A-\lambda_{\text{min}}(A)I_n)\geq s+1,
\end{align*}
where $s=\text{dim} \,\, \text{span}\{b_1,...,b_m\}$,  is fulfilled. They also  have shown that a set of combined first and second order Lagrange multiplier conditions is necessary and sufficient for global optimality and consequently for the strong duality under the dimension condition together with the Slater condition. Very recently, in \cite{17}, Hsia and Sheu attained  the tightness of SDO-relaxation of  eTRS under the following condition
\begin{align*}
\text{Rank}\,\,([A-\lambda_{\text{min}}(A)I_n\quad   b_1\,\, . . . \,\,\, \, b_m])\leq n-1,\tag{NewDC}
\end{align*}
which is more general than the dimension condition by Jeyakumar and Li.  The paper \cite{18}  also studies the same problem and shows that a particular convex relaxation has no gap for arbitrary $m$ linear inequalities as long as the linear constraints are non-intersecting within the ball.  Moreover, recently in \cite{sf} the authors have given an efficient diagonalization based algorithm for solving eTRS under the dimension condition.  To the best of our knowledge, no necessary and sufficient condition has been introduced for strong duality of eTRS except the one studied in Ai and Zhang \cite{20} for a slightly more general problem consisting of  minimizing a quadratic function over two quadratic constraints, one of which being strictly convex. This condition requires one optimal solution of the corresponding SDO-relaxation and hence, it is not verifiable for large-scale instances.

 All the papers mentioned  above show the theoretical tractability of eTRS through an exact SOCO/SDO reformulation or a tight SDO relaxation.  However, solving a large-scale eTRS instance via these convex relaxations is  not practicable.  Most recently, in \cite{17},  the authors  have proposed an inductive algorithm for solving eTRS via handling at most two trust region subproblems which requires  computing local-nonglobal minimizer of involved TRS. Though their algorithm should be more efficient than SOCO/SDO reformulation, it still seems to be expensive for large-scale eTRS instance since involves computing local-nonglobal minimizer of  TRS.

The main contributions of this paper can be divided into the following two parts.
  \begin{itemize}
  \item[(i)] After deriving a new dual problem to eTRS based on a two-parameter linear eigenvalue problem,   we restate the necessary and sufficient condition for strong duality  by Ai and Zhang \cite{20} in terms of an optimal solution of a linearly constrained  bivariate concave maximization problem. This new condition is easily verifiable for sparse eTRS of large size. Moreover, whenever the strong duality does not hold for eTRS, the new dual problem  should be regarded as a new convex relaxation of eTRS.
       \item[(ii)] We propose an efficient algorithm for solving large sparse eTRS  instance whenever strong duality is detected. The algorithm exploits the sparsity of $A$  and the essential cost of it is the matrix-vector multiplication.
  \end{itemize}

  {\bf Notation}\\
Throughout this paper,  for a symmetric matrix $S$,  $S\succ 0  \,
(S\succeq 0)$ denotes $S $ is positive definite ( positive
semidefinite). For a square matrix $ M$, $M^{\dag}$ denotes to the Moore-Penrose generalized inverse
of $M$, $\text{Range}(M)$ and $\text{Null}(M)$ denotes its range and
null spaces, respectively.

\section{ Parametric Eigenvalue Reformulation of Lagrangian Dual }
In this section,  we reformulate  the Lagrangian dual of eTRS as a linearly constrained concave bivariate  maximization  problem which is based on a two-parameter linear  eigenvalue problem. Our result will then be applied in the next section to enable a method for eTRS of large sizes.

Rendl and Wolkowicz \cite{12} have shown that solving the classical  TRS
\begin{align*}
\min \quad & x^TAx-2a^Tx \notag\\
&||x||^2\leq \Delta \tag{TRS},
\end{align*}
is equivalent to the following parametric eigenvalue  problem
\begin{align*}
\max_t \quad  k(t):=&(\Delta+1)\lambda_{\text{min}}(D(t))-t \\
s.t \quad \quad \quad  \quad & \lambda_{\text{min}}(D(t))\leq 0,
\end{align*}
where $$D(t)=\begin{bmatrix} t&-a^T\\-a &A\end{bmatrix}.$$
The Lagrangian dual of eTRS can be equivalently written as
\begin{align*}
d^{*}:= \max_{\lambda \geq 0}\min_{||x||^2\leq \Delta} x^TAx-2a^Tx+\lambda(b^Tx-c). \tag{D-eTRS}
\end{align*}
Using the fact that the inner minimization problem in (D-eTRS) is equivalent to
\begin{align}
\max_t\quad &(\Delta+1)\lambda_{\text{min}}(D(t,\lambda))-t \notag\\
s.t\quad \,\,\, &\lambda_{\text{min}}(D(t,\lambda))\leq 0,\notag
\end{align}
where
$$
D(t,\lambda)=\begin{bmatrix}
t& (-a+\frac{\lambda}{2}b)^T\\
(-a+\frac{\lambda}{2}b) & A
\end{bmatrix},$$
 (D-eTRS) can be reduced to the following problem
\begin{align}\label{eigp}
d^{*}=\max_{ \lambda\geq 0} \quad & k(t,\lambda):= (\Delta+1)\lambda_{\text{min}}(D(t,\lambda))-t -\lambda c\notag\\
s.t\quad  \,&\quad \quad  \quad  \lambda_{\text{min}}(D(t,\lambda))\leq 0. \tag{NewD-eTRS}
\end{align}
By the assumption that $A$ is not positive definite, the interlacing properties of $D(t,\lambda)$ and $ A$ implies that  the constraint $\lambda_{\text{min}}(D(t,\lambda))\leq 0$ in (\ref{eigp}) is redundant.
Since $\lambda_{\text{min}} (D(t,\lambda))$ is a concave function, then $k(t,\lambda)$ is concave and this shows that  the Lagrangian dual of eTRS is equivalent to a linearly constrained concave bivariate maximization problem.  It is known that the function $k(t,\lambda)$ is differentiable at any $(t,\lambda)$  for which $\lambda_{\text{min}}(D(t,\lambda)) $  has multiplicity $r=1$. However,  in the case that $r>1$, $k(t,\lambda)$ is non-differentiable and the subdifferential of $k(t,\lambda)$, $\partial k(t,\lambda)$, is the set \cite{19}:
\begin{align}\label{sub}
\partial k(t,\lambda)=\text{Conv}(\{\begin{bmatrix} (\Delta+1)y_0^2(t,\lambda)-1\\(\Delta+1)y_0(t,\lambda)b^Tz(t,\lambda)-c\end{bmatrix}\}),
\end{align}
where $\begin{bmatrix}y_0(t,\lambda)\\ z(t,\lambda)\end{bmatrix}$ is a normalized eigenvector corresponding to $\lambda_{\text{min}}(D(t,\lambda))$. In the sequel,  we would like to verify  the relationship between  the dual problem (\ref{eigp})  and  the so-called SDO relaxation of eTRS.  Problem (\ref{eigp}) can be equivalently written as  a   linear semidefinite optimization problem  by adding the variable $y$.
\begin{align}\label{e4}
d^{*}= \max \quad & (\Delta+1)y-t-\lambda c\notag\\
& D(t,\lambda)-y I\succeq 0,\\
&\lambda \geq 0,\, y\leq 0.\notag
\end{align}
The dual of (\ref{e4}) is
\begin{align}\label{rela}
 d_r^{*}:=\min\quad &M_0\bullet X,\notag\\
&M_1\bullet X\leq \Delta,\notag\\
& M_2\bullet X\leq c,\\
&M_3 \bullet X=1,\notag\\
&X\succeq 0,\notag
\end{align}
which is the SDO-relaxation of eTRS and
$$M_0=\begin{bmatrix}0 &-a^T\\-a &A\end{bmatrix},\, M_1=\begin{bmatrix}0 & 0_{1\times n}\\0_{n\times 1}& I\end{bmatrix}, \,M_2=\begin{bmatrix}0& \frac{b^T}{2}\\\frac{b}{2}& 0_{n\times n}\end{bmatrix},\, M_3=\begin{bmatrix}1& 0_{1\times n}\\0_{n\times 1}& 0_{n \times n}\end{bmatrix}.$$
Note that the Slater condition for eTRS implies that problem (\ref{rela}) has a strictly feasible solution. Moreover, it is clear  to see that (\ref{e4}) also has an interior feasible point. Therefore,  strong duality holds for (\ref{e4}) and (\ref{rela}), i.e.,  $d^{*}=d_r^{*}$ and both optimal values are attained.  This implies that  the dual problem (\ref{eigp})  should be regarded as a new convex relaxation of eTRS. Furthermore, the dual of (\ref{rela}) is
\begin{align}\label{6}
\max \quad & - \lambda_1\Delta -\lambda_2c +\gamma\notag\\
& \begin{bmatrix}
-\gamma & (-a+\frac{\lambda_2}{2}b)^T\\
(-a+\frac{\lambda_2}{2}b)& A+\lambda_1I
\end{bmatrix}\succeq 0,\\
&\lambda_1\geq 0,\, \lambda_2\geq 0\notag.
\end{align}
which is also the Lagrangian dual of eTRS and obviously equivalent to (\ref{eigp}).
In the next section, we show that a global optimal solution of eTRS can be derived from the optimal solution of problem
(\ref{eigp}).
\section{ Strong Duality and  Global Optimization}
In this section,  we explain how one can obtain a global optimal solution of eTRS after solving problem (\ref{eigp}). From now on, we  consider the problem (\ref{eigp}) without the redundant constraint,  $\lambda_{\text{min}}(D(t,\lambda))\leq 0$. We proceed by  recalling  the following two theorems concerning the properties of the eigenvalues and the eigenvectors of the parametric matrix $D(t)$ which will be used in the rest of the paper.
\begin{theorem}[\cite{12}]\label{t1}
Consider the TRS problem. Suppose that $A=Q\Lambda Q^T$ be the eigenvalue decomposition of $A$. Let $\lambda_{\text{min}}(A):=\Lambda_{1,1}$  has multiplicity $i$  and define
$$t_0:=\lambda_{\text{min}}(A)+\sum_{j\in\{i|(Q^Ta)_i\neq 0\}}\frac{(Q^Ta)_j^2}{\Lambda_{j,j}(A)-\lambda_{\text{min}}(A)}.$$
Then,
in the easy case, for $t\in \mathbb{R}$, $\lambda_{\text{min}}(D(t))<\lambda_{\text{min}}(A)$ and has multiplicity 1. In the hard case, for $t<t_0$, $\lambda_{\text{min}} (D(t))<\lambda_{\text{min}}(A)$ and has multiplicity 1, for $t=t_0$, $\lambda_{\text{min}}(D(t))=\lambda_{\text{min}}(A) $ and has multiplicity $i+1$ and for $t>t_0$, $\lambda_{\text{min}}(D(t))=\lambda_{\text{min}}(A)$ and has multiplicity $i$.
\end{theorem}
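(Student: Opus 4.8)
The plan is to strip off the bordered structure by an orthogonal change of variables, reduce the question of eigenvalues below $\lambda_{\text{min}}(A)$ to a one–dimensional secular equation, and use Cauchy's interlacing theorem to control the multiplicities. First I would conjugate $D(t)$ by $\text{diag}(1,Q)$, which preserves the spectrum together with all multiplicities and replaces $A$ by $\Lambda$ and $a$ by $\tilde a:=Q^Ta$; so assume $A=\Lambda$ is diagonal with $\Lambda_{1,1}=\cdots=\Lambda_{i,i}=\lambda_{\text{min}}(A)<\Lambda_{i+1,i+1}\le\cdots\le\Lambda_{n,n}$. In this notation the easy case is ``$\tilde a_j\neq 0$ for some $j\le i$'' and the hard case is ``$\tilde a_j=0$ for all $j\le i$'', which is exactly when the sum defining $t_0$ is finite; in the hard case $t_0=\lambda_{\text{min}}(A)+\sum_{j>i}\tilde a_j^2/(\Lambda_{j,j}-\lambda_{\text{min}}(A))$.

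Next, since $\Lambda$ is the principal submatrix of $D(t)$ obtained by deleting the first row and column, Cauchy interlacing gives $\lambda_1(D(t))\le\lambda_1(\Lambda)=\lambda_{\text{min}}(A)\le\lambda_2(D(t))$ and $\lambda_{i+1}(\Lambda)\le\lambda_{i+2}(D(t))$ for all $t$. Hence at most one eigenvalue of $D(t)$ can lie strictly below $\lambda_{\text{min}}(A)$ — so whenever $\lambda_{\text{min}}(D(t))<\lambda_{\text{min}}(A)$ its multiplicity is automatically $1$ — and (using that $\lambda_{\text{min}}(A)$ has multiplicity exactly $i$ in $\Lambda$) the multiplicity of $\lambda_{\text{min}}(A)$ in $D(t)$ never exceeds $i+1$. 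It then remains only to decide when $\lambda_{\text{min}}(D(t))<\lambda_{\text{min}}(A)$ and, in the complementary cases, to compute the multiplicity of $\lambda_{\text{min}}(A)$.

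For the first point, fix $\mu<\lambda_{\text{min}}(A)$; then $\Lambda-\mu I$ is invertible, so any eigenvector $(y_0,z)^T$ of $D(t)$ for $\mu$ must have $y_0\neq 0$ (otherwise $z$ would be an eigenvector of $\Lambda$ with eigenvalue $\mu<\lambda_{\text{min}}(A)$), and eliminating $z=y_0(\Lambda-\mu I)^{-1}\tilde a$ shows that $\mu$ is an eigenvalue of $D(t)$ if and only if the secular function $f_t(\mu):=t-\mu-\sum_{j=1}^{n}\tilde a_j^2/(\Lambda_{j,j}-\mu)$ vanishes. On $(-\infty,\lambda_{\text{min}}(A))$ one has $f_t'(\mu)\le-1<0$ and $f_t(\mu)\to+\infty$ as $\mu\to-\infty$, while as $\mu\uparrow\lambda_{\text{min}}(A)$ a term with $\Lambda_{j,j}=\lambda_{\text{min}}(A)$ and $\tilde a_j\neq0$ drives $f_t(\mu)\to-\infty$ in the easy case, whereas in the hard case the surviving terms converge and $f_t(\mu)\to t-t_0$. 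Therefore $f_t$ has a unique root below $\lambda_{\text{min}}(A)$ for every $t$ in the easy case, and exactly when $t<t_0$ in the hard case; combined with the interlacing bound this gives $\lambda_{\text{min}}(D(t))<\lambda_{\text{min}}(A)$ with multiplicity $1$ in the easy case and in the hard case for $t<t_0$, and $\lambda_{\text{min}}(D(t))=\lambda_{\text{min}}(A)$ in the hard case for $t\ge t_0$.

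Finally, in the hard case with $t\ge t_0$ I would solve $(D(t)-\lambda_{\text{min}}(A)I)(y_0,z)^T=0$ coordinatewise: the rows indexed by $j\le i$ are automatically satisfied (both sides vanish, since $\Lambda_{j,j}=\lambda_{\text{min}}(A)$ and $\tilde a_j=0$), the rows indexed by $j>i$ force $z_j=\tilde a_jy_0/(\Lambda_{j,j}-\lambda_{\text{min}}(A))$, and the remaining first row collapses to $(t-t_0)y_0=0$. Hence when $t>t_0$ the eigenspace is $i$-dimensional — it consists of the vectors with $y_0=0$ whose $z$-part ranges over $\text{Null}(\Lambda-\lambda_{\text{min}}(A)I)$ — and when $t=t_0$ it is $(i+1)$-dimensional, one further vector with $y_0\neq0$ joining those; this is exactly the asserted multiplicity. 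The step I expect to require the most care is this threshold $t=t_0$: one must verify that $t_0$ is precisely the limit of the secular function as $\mu\uparrow\lambda_{\text{min}}(A)$, and that at $t=t_0$ the extra eigenvector with nonzero first coordinate genuinely appears and is linearly independent of those coming from $\text{Null}(\Lambda-\lambda_{\text{min}}(A)I)$.
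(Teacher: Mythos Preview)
The paper does not supply a proof of this theorem: it is quoted verbatim from Rendl and Wolkowicz \cite{12} and used as a black box, so there is no in-paper argument to compare against. Your proposal is therefore providing a self-contained proof where the paper simply cites one.

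As for correctness, your outline is sound. Diagonalising via $\text{diag}(1,Q)$ is the right reduction, Cauchy interlacing immediately bounds the multiplicity of any eigenvalue strictly below $\lambda_{\text{min}}(A)$ by one, and the secular function $f_t(\mu)=t-\mu-\sum_j\tilde a_j^2/(\Lambda_{j,j}-\mu)$ is the standard tool for locating the bordered eigenvalues outside the spectrum of $\Lambda$. Your monotonicity and limiting analysis of $f_t$ on $(-\infty,\lambda_{\text{min}}(A))$ correctly distinguishes the easy and hard cases and pins down the threshold $t_0$. The final coordinate-by-coordinate computation of $\text{Null}(D(t)-\lambda_{\text{min}}(A)I)$ in the hard case is also right, and your concern about the $t=t_0$ boundary is well placed but easily handled: the extra eigenvector with $y_0\neq 0$ has its $z$-components for $j>i$ fixed by $z_j=\tilde a_jy_0/(\Lambda_{j,j}-\lambda_{\text{min}}(A))$, so linear independence from the $y_0=0$ block is immediate. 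This is essentially the argument one finds in \cite{12}.
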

\begin{theorem}[\cite{12}]\label{t2}
Consider the TRS problem. Let $y(t)$ be an eigenvector for $\lambda_{\text{min}}(D(t))$ and let $y_0(t)$ be its first component. Then in the easy case, for $t\in \mathbb{R}$, $y_0(t)\neq 0$. In the hard case,  for $t<t_0$, $y_0(t)\neq 0$, for $t>t_0$, $y_0(t)=0$ and for $t=t_0$, there exists a basis for eigenspace of $\lambda_{\text{min}}(D(t_0))$ such that one eigenvector of this basis satisfies  $y_0(t)\neq0$ and the other eigenvectors satisfy $y_0(t)=0$.
\end{theorem}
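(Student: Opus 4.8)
The plan is to work directly with the eigen-equation of the bordered matrix $D(t)$. Write a candidate eigenvector as $\begin{bmatrix}y_0\\z\end{bmatrix}$ with $z\in\mathbb{R}^n$, and set $\mu:=\lambda_{\text{min}}(D(t))$. Then $D(t)\begin{bmatrix}y_0\\z\end{bmatrix}=\mu\begin{bmatrix}y_0\\z\end{bmatrix}$ is equivalent to the two relations
\[
(t-\mu)\,y_0=a^Tz,\qquad (A-\mu I)\,z=a\,y_0 .
\]
All three regimes in the statement will be read off from these identities, once Theorem \ref{t1} tells us where $\mu$ sits relative to $\lambda_{\text{min}}(A)$ and with what multiplicity.

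First I would dispose of every case in which $\mu<\lambda_{\text{min}}(A)$, which by Theorem \ref{t1} is the easy case for all $t$ and the hard case for $t<t_0$. Here $A-\mu I\succ 0$ is invertible, so the second relation forces $z=(A-\mu I)^{-1}a\,y_0$; if $y_0=0$ then $z=0$ and the eigenvector vanishes, a contradiction. Hence $y_0(t)\neq 0$ in all these cases.

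Next, the hard case with $t\ge t_0$, where Theorem \ref{t1} gives $\mu=\lambda_{\text{min}}(A)$. Recall that the hard case means $a\perp\text{Null}(A-\lambda_{\text{min}}(A)I)$, equivalently $a\in\text{Range}(A-\mu I)$. Consequently $\{0\}\times\text{Null}(A-\mu I)$ already consists of eigenvectors of $D(t)$ for $\mu$ (the first relation collapses to $a^Tz=0$, which holds on $\text{Null}(A-\mu I)$), contributing exactly $i$ independent eigenvectors with $y_0=0$. If some eigenvector had $y_0\neq 0$, normalize $y_0=1$; then $z=(A-\mu I)^{\dag}a+w$ for some $w\in\text{Null}(A-\mu I)$, and substituting into $(t-\mu)y_0=a^Tz$ while using $a^Tw=0$ forces $t-\mu=a^T(A-\mu I)^{\dag}a$. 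A short computation with $A=Q\Lambda Q^T$ identifies $a^T(A-\mu I)^{\dag}a=\sum_{j:(Q^Ta)_j\neq 0}\frac{(Q^Ta)_j^2}{\Lambda_{j,j}-\mu}=t_0-\mu$, so necessarily $t=t_0$. For $t>t_0$, where Theorem \ref{t1} makes the eigenspace $i$-dimensional, this is impossible, so $y_0(t)=0$; for $t=t_0$, where the eigenspace has dimension $i+1$, the vector $\begin{bmatrix}1\\(A-\mu I)^{\dag}a\end{bmatrix}$ is checked directly to be an eigenvector, and it together with a basis of $\{0\}\times\text{Null}(A-\mu I)$ yields the required basis.

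I expect the main obstacle to be bookkeeping rather than anything deep: one must make sure that the characterization of the hard case being used ($a\perp\text{Null}(A-\lambda_{\text{min}}(A)I)$) is exactly the one under which Theorem \ref{t1} delivers the stated multiplicities, and the identity $a^T(A-\mu I)^{\dag}a=t_0-\mu$ must be derived with the sum restricted to the indices $j$ with $(Q^Ta)_j\neq 0$ (equivalently $\Lambda_{j,j}>\mu$ in the hard case). Once these points are pinned down, the dimension counts at $t=t_0$ and $t>t_0$ close the argument.
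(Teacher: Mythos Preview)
The paper does not prove Theorem~\ref{t2}; it is recalled from \cite{12} without argument, as one of two background results on the bordered matrix $D(t)$. There is therefore no ``paper's own proof'' to compare against.

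That said, your proposal is a correct self-contained proof. The two scalar/block relations you extract from $D(t)\begin{bmatrix}y_0\\z\end{bmatrix}=\mu\begin{bmatrix}y_0\\z\end{bmatrix}$ are exactly right, and the case split driven by Theorem~\ref{t1} is the natural one. In the regime $\mu<\lambda_{\text{min}}(A)$ the invertibility of $A-\mu I$ forces $y_0\neq 0$ immediately. In the hard case at $\mu=\lambda_{\text{min}}(A)$, your use of $a\in\text{Range}(A-\mu I)$ to exhibit the $i$ eigenvectors $\{0\}\times\text{Null}(A-\mu I)$, followed by the computation $a^T(A-\mu I)^{\dag}a=t_0-\mu$ pinning any $y_0\neq 0$ eigenvector to $t=t_0$, is clean and correct; note that the restriction of the sum to indices with $(Q^Ta)_j\neq 0$ is automatic here because the hard-case orthogonality kills precisely the $j$ with $\Lambda_{j,j}=\mu$. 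The dimension count then closes both $t>t_0$ and $t=t_0$ exactly as you say. The only cosmetic point is to state explicitly that $\begin{bmatrix}1\\(A-\mu I)^{\dag}a\end{bmatrix}$ is linearly independent from the $i$ vectors with vanishing first component, which is immediate.
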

The following theorem  states a necessary and sufficient condition for strong duality of eTRS. It  was studied in  Ai and Zhang \cite{20} for a slightly more general problem and  is helpful to obtain the main result of this section.
\begin{theorem}[\cite{20}]\label{t3}
Suppose that eTRS satisfies the Slater condition. Then eTRS has no strong duality if and only if there exist multipliers $\lambda, \mu $ such that the following hold:
\begin{itemize}
\item [1.] $\lambda>0$ and $\mu >0$;
\item [2.] $H(\lambda):=A+\lambda I\succeq 0$, and $\text{Rank} (H(\lambda))=n-1$;
\item [3.] The system of linear equations, $H(\lambda)x=a-\frac{\mu}{2}b$, has two solutions $x_1$ and $x_2$ satisfying $x_i^Tx_i=\Delta $, $i=1,2$, and $(b^Tx_1-c)(b^Tx_2-c)<0$.
\end{itemize}
\end{theorem}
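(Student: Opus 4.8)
\medskip
\noindent The plan is to reduce strong duality of eTRS to exactness of the semidefinite relaxation~(\ref{rela}) and then to analyse the low-rank optimal solutions of~(\ref{rela}). Write $q(x):=x^{T}Ax-2a^{T}x$. Since the excerpt already gives $d^{*}=d_{r}^{*}$ (with both optima attained) and $d_{r}^{*}\le p^{*}$ always, strong duality holds exactly when (\ref{rela}) has a rank-one optimal solution, from which one reads off an optimal point of eTRS; and by the standard rank-reduction bound (\ref{rela}) always has an optimal solution of rank at most two. So a duality gap is equivalent to: no optimal solution of (\ref{rela}) has rank one. The structural device I would use in both directions is: if $\lambda,\mu>0$ satisfy $H(\lambda)\succeq 0$ and $a-\tfrac{\mu}{2}b\in\text{Range}(H(\lambda))$, then on the critical line $L:=\{x:H(\lambda)x=a-\tfrac{\mu}{2}b\}$ the Lagrangian $\mathcal{L}(x):=x^{T}H(\lambda)x-2(a-\tfrac{\mu}{2}b)^{T}x-\lambda\Delta-\mu c$ is constant, equal to $g(\lambda,\mu)=:\ell$; consequently $q(x)\ge\ell$ for every eTRS-feasible $x$, with equality precisely when $x\in L$, $\|x\|^{2}=\Delta$ and $b^{T}x=c$. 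This is a one-line computation using $H(\lambda)v=0$ and $v\perp\text{Range}(H(\lambda))$ for $v$ spanning $\text{Null}(H(\lambda))$.

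\medskip
\noindent\emph{Sufficiency.} Given $\lambda,\mu,x_{1},x_{2}$ as in 1--3, pick the unique $\theta\in(0,1)$ with $b^{T}\bar x=c$, where $\bar x:=\theta x_{1}+(1-\theta)x_{2}$ (it exists since $b^{T}x_{1}-c$ and $b^{T}x_{2}-c$ have opposite signs). With $y_{i}:=(1,\,x_{i}^{T})^{T}$, the matrix $X^{*}:=\theta\,y_{1}y_{1}^{T}+(1-\theta)\,y_{2}y_{2}^{T}$ is feasible for (\ref{rela}): $M_{1}\bullet X^{*}=\Delta$ because $\|x_{i}\|^{2}=\Delta$, $M_{2}\bullet X^{*}=b^{T}\bar x=c$, and $M_{3}\bullet X^{*}=1$. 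Since $x_{1},x_{2}\in L$, the structural fact gives $q(x_{i})=\ell-\mu(b^{T}x_{i}-c)$, whence $M_{0}\bullet X^{*}=\ell-\mu(b^{T}\bar x-c)=\ell$; combined with $\ell=g(\lambda,\mu)\le d^{*}=d_{r}^{*}\le M_{0}\bullet X^{*}$ this forces $d^{*}=\ell$. Finally, $p^{*}=d^{*}$ would, by the structural fact, produce an eTRS-feasible point lying on $L$, on the sphere $\|x\|^{2}=\Delta$, and on the hyperplane $b^{T}x=c$; but $\text{rank}\,H(\lambda)=n-1$ makes $L$ a line, meeting the sphere only in $x_{1},x_{2}$, neither of which is on the hyperplane. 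Hence $p^{*}>d^{*}$.

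\medskip
\noindent\emph{Necessity.} Suppose there is a duality gap; then $A$ is not positive semidefinite, since a convex QCQP with the Slater condition has no gap. Choose a minimal-rank optimal $X^{*}$ of (\ref{rela}); by the above it has rank two, and since $X^{*}_{11}=M_{3}\bullet X^{*}=1$ we may write $X^{*}=\theta\,y_{1}y_{1}^{T}+(1-\theta)\,y_{2}y_{2}^{T}$ with $\theta\in(0,1)$, $y_{i}=(1,\,u_{i}^{T})^{T}$ and $u_{1}\ne u_{2}$. Let $(\lambda_{1},\lambda_{2},\gamma)$ be optimal for (\ref{6}) with positive semidefinite slack matrix $Z$ whose $(2,2)$ block is $A+\lambda_{1}I$. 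Then $\lambda_{1}>0$ (otherwise $A$ would be a principal submatrix of $Z\succeq 0$), and $\lambda_{2}>0$ (otherwise $d^{*}$ equals the optimal value of the classical TRS, whose relaxation is tight, and a short case analysis based on Theorems~\ref{t1}--\ref{t2} then produces a TRS optimizer also satisfying $b^{T}x\le c$, contradicting the gap); put $\lambda:=\lambda_{1}$, $\mu:=\lambda_{2}$ -- this is part~1 -- and $H(\lambda)=A+\lambda I\succeq 0$, with $\ell:=g(\lambda,\mu)=d^{*}$ since $(\lambda_{1},\lambda_{2},\gamma)$ is dual-optimal. From $ZX^{*}=0$ and $\text{rank}\,X^{*}=2$ we get $\text{rank}\,H(\lambda)\le n-1$; the reverse inequality holds because $\text{rank}\,H(\lambda)\le n-2$ is exactly the dimension condition (DC), which with the Slater condition forces zero gap~\cite{14}, so $\text{rank}\,H(\lambda)=n-1$ (part~2); and $ZX^{*}=0$ also yields $a-\tfrac{\mu}{2}b\in\text{Range}(H(\lambda))$ and $u_{1},u_{2}\in L$. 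The identity $d_{r}^{*}=\theta q(u_{1})+(1-\theta)q(u_{2})=\ell-\lambda(M_{1}\bullet X^{*}-\Delta)-\mu(M_{2}\bullet X^{*}-c)$, together with $d_{r}^{*}=\ell$ and feasibility of $X^{*}$, forces $M_{1}\bullet X^{*}=\Delta$ and $M_{2}\bullet X^{*}=c$. Writing points of $L$ as $x^{*}+sv$, the map $s\mapsto\|x^{*}+sv\|^{2}$ is strictly convex while $s\mapsto b^{T}(x^{*}+sv)$ is affine; a Jensen argument shows the affine map is non-constant on $L$ (else $L$ would meet the sphere and the hyperplane simultaneously, contradicting $p^{*}>\ell$) and that $c$ lies strictly between its values at the two points $x_{1},x_{2}$ where $L$ meets the sphere -- which exist because $M_{1}\bullet X^{*}=\Delta$ forces $\|x^{*}+sv\|^{2}$ to dip below $\Delta$. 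Then $x_{1},x_{2}$ satisfy $H(\lambda)x_{i}=a-\tfrac{\mu}{2}b$, $x_{i}^{T}x_{i}=\Delta$ and $(b^{T}x_{1}-c)(b^{T}x_{2}-c)<0$, i.e.\ parts 2 and 3.

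\medskip
\noindent The main obstacle I anticipate is the necessity direction, and within it the final ``re-centering'' onto the sphere: from an arbitrary minimal-rank optimal solution of (\ref{rela}) one must extract the specific pair $x_{1},x_{2}$ that lies on the boundary of the ball \emph{and} is strictly straddled by the hyperplane $b^{T}x=c$. This is exactly where the interplay between the strictly convex $s\mapsto\|x^{*}+sv\|^{2}$ and the affine $s\mapsto b^{T}(x^{*}+sv)$ along the critical line is indispensable, and the degenerate possibilities $b^{T}v=0$ and ``a support point strictly interior to the ball'' have to be excluded. A smaller, auxiliary difficulty is showing $\lambda_{2}>0$ -- that the linear inequality is genuinely binding already at the relaxation level -- which is settled by comparison with the classical trust region subproblem via Theorems~\ref{t1} and~\ref{t2}.
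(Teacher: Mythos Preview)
The paper does not prove Theorem~\ref{t3}: it is quoted verbatim from Ai and Zhang~\cite{20} and then used as a black box in the proof of Theorem~\ref{t4} and in Corollary~3.6. There is therefore no in-paper argument to compare your attempt against.

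That said, your SDP-based route is sound and is in the spirit of the original proof in~\cite{20}. Two places deserve a slightly sharper justification than you give. First, for $\lambda_{2}>0$ the pointer to Theorems~\ref{t1}--\ref{t2} is a detour; the direct argument is that $\varphi(\lambda):=\min_{\|x\|^{2}\le\Delta}\bigl(q(x)+\lambda(b^{T}x-c)\bigr)$ is concave with $\partial\varphi(0)=\mathrm{conv}\{b^{T}x-c:x\text{ TRS-optimal}\}$, so optimality of $\lambda=0$ would yield a TRS-optimizer with $b^{T}x\le c$, hence an eTRS-feasible point achieving $d^{*}$, contradicting the gap. Second, the ``Jensen argument'' at the end is correct but compressed: with $u_{i}=x^{*}+t_{i}v$ and $\bar t=\theta t_{1}+(1-\theta)t_{2}$, affinity of $s\mapsto b^{T}(x^{*}+sv)$ gives $b^{T}(x^{*}+\bar t v)=c$, while strict convexity of $s\mapsto\|x^{*}+sv\|^{2}$ together with $M_{1}\bullet X^{*}=\Delta$ and $t_{1}\neq t_{2}$ gives $\|x^{*}+\bar t v\|^{2}<\Delta$; hence $\bar t$ lies strictly between the two sphere parameters $s_{1},s_{2}$, and monotonicity of the affine map (you already excluded $b^{T}v=0$) places $c$ strictly between $b^{T}x_{1}$ and $b^{T}x_{2}$. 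With these two points spelled out, your proof is complete.
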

Before giving the main theorem, we need the following lemma.
\begin{lemma}\label{le}
Suppose that $(t^{*},\lambda^{*})$ is an optimal solution of problem (\ref{eigp}). Let $\lambda_{min}(A) $  and  $\lambda_{\text{min}}(D(t^{*},\lambda^{*}))$ have multiplicity $i$ and $r$, respectively. Define
$$t_0:=\lambda_{min}(A)+\sum_{j\in\{i|(Q^T(a-\frac{\lambda^{*}}{2}b))_i\neq 0\}}\frac{(Q^T(a-\frac{\lambda^{*}}{2}b))_j^2}{\Lambda_{j,j}(A)-\lambda_{\text{min}}(A)}.$$
Then:
\begin{itemize}
\item[1.] If $\lambda_{\text{min}}(D(t^{*},\lambda^{*})) <\lambda_{\text{min}}(A)$, then $r=1$ otherwise $r=i+1$.
\item[2.]
Let $y(t^{*},\lambda^{*})$ be an eigenvector for $\lambda_{\text{min}}(D(t^{*},\lambda^{*}))$ and let $y_0(t^{*},\lambda^{*})$ be its first component.  If $\lambda_{\text{min}}(D(t^{*},\lambda^{*})) <\lambda_{\text{min}}(A)$, then, $y_0(t^{*},\lambda^{*})\neq 0$  otherwise  there exists a basis for eigenspace of $\lambda_{\text{min}}(D(t^{*},\lambda^{*}))$ such that one eigenvector of this basis satisfies  $y_0(t^{*},\lambda^{*})\neq0$ and the other eigenvectors satisfy $y_0(t^{*},\lambda^{*})=0$.
\end{itemize}
\end{lemma}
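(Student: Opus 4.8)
The plan is to reduce Lemma~\ref{le} to the already-established Theorems~\ref{t1} and~\ref{t2} by observing that problem (\ref{eigp}), with $\lambda$ frozen at the optimal value $\lambda^{*}$, is precisely a classical TRS in the single variable $t$. Concretely, set $\tilde a:=a-\frac{\lambda^{*}}{2}b$. Then $D(t,\lambda^{*})=\begin{bmatrix} t&-\tilde a^T\\-\tilde a&A\end{bmatrix}$ is exactly the matrix $D(t)$ associated (in the sense of Theorem~\ref{t1}) with the TRS having linear term $-2\tilde a^Tx$ and the same $A$ and $\Delta$. The quantity $t_0$ defined in the lemma is literally the $t_0$ of Theorem~\ref{t1} for this auxiliary TRS. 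So once we know that $t^{*}$ is an optimal $t$ for the one-variable maximization $\max_t\,(\Delta+1)\lambda_{\text{min}}(D(t,\lambda^{*}))-t-\lambda^{*}c$, both conclusions follow verbatim: part~1 is the multiplicity dichotomy of Theorem~\ref{t1} (easy case vs.\ hard case, split according to whether $\lambda_{\text{min}}(D(t^{*},\lambda^{*}))<\lambda_{\text{min}}(A)$ or equals it), and part~2 is the corresponding statement of Theorem~\ref{t2} about the first component $y_0$ of the eigenvector.

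The one genuine point to nail down is therefore: \emph{if $(t^{*},\lambda^{*})$ maximizes $k(t,\lambda)$ jointly, then $t^{*}$ maximizes $t\mapsto k(t,\lambda^{*})$.} This is immediate — fixing one coordinate of a global maximizer of a concave function of two variables leaves a global maximizer of the restricted function — but I would state it explicitly because it is what licenses importing the TRS results. I would also remark that the constant $-\lambda^{*}c$ is irrelevant to the maximization over $t$, and that the redundant constraint $\lambda_{\text{min}}(D(t,\lambda))\le 0$ has been dropped (as noted in the text, since $A$ is not positive definite the interlacing inequality $\lambda_{\text{min}}(D(t,\lambda))\le\lambda_{\text{min}}(A)<0$ always holds), so the restricted problem is exactly the unconstrained parametric eigenvalue form of TRS to which Theorems~\ref{t1}–\ref{t2} apply.

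There is one subtlety that deserves a sentence: in the hard case Theorem~\ref{t1} gives three sub-cases for $t$ relative to $t_0$ ($t<t_0$ with multiplicity $i+1$ attained only at $t=t_0$, etc.), whereas the lemma only asserts $r=i+1$ when $\lambda_{\text{min}}(D(t^{*},\lambda^{*}))=\lambda_{\text{min}}(A)$. To reconcile these I would invoke optimality of $t^{*}$: the map $t\mapsto (\Delta+1)\lambda_{\text{min}}(D(t,\lambda^{*}))-t$ is concave, it is strictly increasing with slope $>1$ for $t<t_0$ (where $\lambda_{\text{min}}$ still strictly decreases as $t$ decreases, i.e.\ increases in $t$ at a rate exceeding $\tfrac{1}{\Delta+1}$) and has slope $0-1=-1<0$ for $t>t_0$ (where $\lambda_{\text{min}}(D(t,\lambda^{*}))$ is frozen at $\lambda_{\text{min}}(A)$), so the maximizer is exactly $t^{*}=t_0$; hence whenever the optimal value of $\lambda_{\text{min}}$ equals $\lambda_{\text{min}}(A)$ we are at $t=t_0$, where the multiplicity is $i+1$ and where the eigenspace basis described in Theorem~\ref{t2} (one eigenvector with $y_0\neq0$, the rest with $y_0=0$) exists. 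The main obstacle, such as it is, is bookkeeping: making sure the $t_0$ of the lemma is matched correctly to the $t_0$ of Theorem~\ref{t1} for the \emph{shifted} linear term $\tilde a$, and phrasing the optimality argument so that the hard-case sub-cases collapse to the single statement claimed. No heavy computation is involved.
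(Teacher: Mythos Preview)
Your approach is essentially the paper's: freeze $\lambda=\lambda^{*}$, observe that $t^{*}\in\arg\max_t k(t,\lambda^{*})$, and then import Theorems~\ref{t1} and~\ref{t2} for the auxiliary TRS with linear term $\tilde a=a-\frac{\lambda^{*}}{2}b$. The paper does exactly this and derives part~2 as an immediate consequence of part~1 and Theorem~\ref{t2}.

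The one place your write-up deviates is the argument that $t^{*}=t_0$ in the hard case. You assert that $t\mapsto(\Delta+1)\lambda_{\text{min}}(D(t,\lambda^{*}))-t$ is ``strictly increasing with slope $>1$'' for $t<t_0$; the ``$>1$'' is a slip, and the monotonicity claim, while in fact true, is not contained in Theorems~\ref{t1}--\ref{t2} and would need its own justification (it amounts to showing $y_0(t)^2\ge\frac{1}{\Delta+1}$ for all $t<t_0$ in the hard case). The paper sidesteps this: it uses only the slope $-1$ for $t>t_0$ to get $t^{*}\le t_0$, and then reads directly from Theorem~\ref{t1} that $\lambda_{\text{min}}(D(t^{*},\lambda^{*}))=\lambda_{\text{min}}(A)$ can occur only for $t^{*}\ge t_0$, hence $t^{*}=t_0$. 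Replacing your monotonicity assertion with this one-line observation makes the argument identical to the paper's and removes the unjustified step.
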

\begin{proof}
\begin{itemize}
\item [1.]
Since $(t^{*},\lambda^{*})$ is an optimal solution of problem (\ref{eigp}), we have
$$t^{*}\in \text{argmax}_t\quad k(t,\lambda^{*}):= (\Delta+1)\lambda_{\text{min}}(D(t,\lambda^{*}))-t -\lambda^{*} c. $$
Hence, by Theorem \ref{t1}, we know if $\lambda_{\text{min}}(D(t^{*},\lambda^{*})) <\lambda_{\text{min}}(A)$, then $r=1$. Now suppose that $\lambda_{\text{min}}(D(t^{*},\lambda^{*})) =\lambda_{\text{min}}(A)$. Then  from Theorem \ref{t1}   the following trust region subproblem is a hard case instance:
\begin{align*}
\min \quad & x^TAx-2(a-\frac{\lambda^{*}}{2}b)^Tx \notag\\
&||x||^2\leq \Delta \notag.
\end{align*}
In this case,  Theorem \ref{t2} implies that the  function $k(t,\lambda^{*})$ is differentiable at any $t>t_0$ with $\frac{\partial k(t,\lambda^{*})}{t}=-1$. Therefore,   $t^{*}\leq t_0$. Furthermore,  it also immediately follows from Theorem \ref{t1}  that the case where $\lambda_{\text{min}}(D(t^{*},\lambda^{*}))=\lambda_{\text{min}}(A)$  corresponds to $t^{*}=t_0$ and consequently, $r=i+1$.
\item [2.] The statement immediately follows from Item 1 and Theorem \ref{t2}.
\end{itemize}
 \end{proof}
 The following is the main result of the paper.
\begin{theorem}\label{t4}
 Let $(t^{*},\lambda^{*})$ be  an optimal solution of problem (\ref{eigp}). Then
\begin{itemize}
\item[1.] Let $\lambda_{\text{min}}(D(t^{*},\lambda^{*}))<\lambda_{\text{min}}(A)$ and let $y(t^{*},\lambda^{*})=[y_0(t^{*},\lambda^{*}), z(t^{*},\lambda^{*})]^T$ be a normalized  eigenvector for $\lambda_{\text{min}}(D(t^{*},\lambda^{*}))$. Then the strong duality holds for eTRS and  $x^{*}=\frac{z(t^{*},\lambda^{*})}{y_0(t^{*},\lambda^{*})} $    is a global optimal solution of eTRS.
\item[2.] Let $\lambda_{\text{min}}(D(t^{*},\lambda^{*}))=\lambda_{\text{min}}(A)$ and let $y(t^{*},\lambda^{*})=[y_0(t^{*},\lambda^{*}), z_0(t^{*},\lambda^{*})]^T$ be a normalized eigenvector for $\lambda_{\text{min}}(D(t^{*},\lambda^{*}))$ with $y_0(t^{*},\lambda^{*})\neq 0$.  Then
      \begin{itemize}
      \item[2.1.] If $\lambda_{\text{min}}(A)<0$  has multiplicity one and $\lambda^{*}>0$ and
       the linear system of equations, $ (A-\lambda_{\text{min}}(A) I) x= a -\frac{\lambda^{*}}{2}b $ has two solutions $x_1$ and $ x_2$ satisfying $x_i^Tx_i=\Delta$, $i=1,2$ and $(b^Tx_1-c)(b^Tx_2-c)<0$, then  eTRS has no strong duality.
      \item[2.2.] Otherwise, the strong duality holds for eTRS and there exists an eigenvector  $z$ corresponding to $\lambda_{\text{min}}(A)$  such that  $x^{*}=\frac{z_0(t^{*},\lambda^{*})}{y_0(t^{*},\lambda^{*})}+z $ is a global optimal solution of eTRS.
      \end{itemize}
\end{itemize}
\end{theorem}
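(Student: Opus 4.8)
The plan is to exploit the strong duality between the semidefinite programs (\ref{e4}) and (\ref{rela}), combined with complementary slackness, to read off a primal solution of eTRS from the dual optimizer $(t^*,\lambda^*)$. First I would note that since $(t^*,\lambda^*)$ (with the auxiliary variable $y^* = \lambda_{\text{min}}(D(t^*,\lambda^*))$) solves (\ref{e4}), and strong duality plus attainment hold for the SDO pair, there is an optimal $X^*\succeq 0$ of (\ref{rela}) with $M_0\bullet X^* = d^* = d_r^*$. Complementary slackness gives $(D(t^*,\lambda^*) - y^* I)X^* = 0$, so every column of $X^*$ lies in the eigenspace of $\lambda_{\text{min}}(D(t^*,\lambda^*))$, and also $\lambda^*(M_2\bullet X^* - c) = 0$ together with $y^* \cdot(M_3\bullet X^* - 1)$-type relations. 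I would then split on the multiplicity $r$ of $\lambda_{\text{min}}(D(t^*,\lambda^*))$ as determined by Lemma~\ref{le}.

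For Item 1, $\lambda_{\text{min}}(D(t^*,\lambda^*)) < \lambda_{\text{min}}(A)$, Lemma~\ref{le} gives $r=1$ and $y_0(t^*,\lambda^*)\neq 0$. Then $X^*$, being PSD with all columns in the one-dimensional eigenspace spanned by $y(t^*,\lambda^*)$, must be a positive multiple of $y(t^*,\lambda^*)y(t^*,\lambda^*)^T$; the normalization $M_3\bullet X^* = 1$ forces $X^* = \frac{1}{y_0^2}\,yy^T$. Writing $X^* = \begin{bmatrix}1 & (x^*)^T\\ x^* & x^*(x^*)^T\end{bmatrix}$ with $x^* = z(t^*,\lambda^*)/y_0(t^*,\lambda^*)$, one checks $X^*$ is rank one, hence $x^*$ is feasible for eTRS ($\|x^*\|^2 = M_1\bullet X^*\le \Delta$, $b^Tx^* = M_2\bullet X^*\le c$) and attains the dual value, so by weak duality $x^*$ is globally optimal and strong duality holds. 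This is the cleanest case.

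For Item 2, $\lambda_{\text{min}}(D(t^*,\lambda^*)) = \lambda_{\text{min}}(A)$, Lemma~\ref{le} gives $r = i+1$ where $i$ is the multiplicity of $\lambda_{\text{min}}(A)$, and the eigenspace has a basis consisting of one vector with nonzero first component and $i$ vectors of the form $\begin{bmatrix}0\\ z_j\end{bmatrix}$ with $(A-\lambda_{\text{min}}(A)I)z_j = 0$. Now $X^*$ is a PSD combination of outer products of vectors from this eigenspace; after using $M_3\bullet X^* = 1$ to fix the weight on the $y_0\neq 0$ direction, a rank-one feasible point of the form $x^* = z_0/y_0 + z$ with $z\in\text{Null}(A-\lambda_{\text{min}}(A)I)$ is available precisely when one can choose $z$ so that $\|x^*\|^2 = \Delta$ and $b^Tx^*\le c$ simultaneously while matching complementary slackness ($\lambda^*>0 \Rightarrow b^Tx^* = c$). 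To show this is always possible \emph{unless} the obstruction in 2.1 occurs, I would invoke Theorem~\ref{t3}: the failure of strong duality is equivalent to the existence of $\lambda,\mu>0$ with $H(\lambda)\succeq 0$, $\text{Rank}(H(\lambda)) = n-1$, and the two-solution/sign-change condition; one identifies $\lambda = -\lambda_{\text{min}}(A)$ (forcing $i=1$, so $H(\lambda)$ has rank $n-1$) and $\mu = \lambda^*$, and the system $H(\lambda)x = a - \frac{\mu}{2}b$ is exactly $(A-\lambda_{\text{min}}(A)I)x = a - \frac{\lambda^*}{2}b$, whose two norm-$\Delta$ solutions with opposite-sign values of $b^Tx - c$ are precisely the hypothesis of 2.1. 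So when 2.1 fails, Theorem~\ref{t3} guarantees strong duality, and then unpacking the optimal $X^*$ (or, equivalently, the set of optimal Lagrange multipliers) produces the required $z$ and the global minimizer $x^* = z_0(t^*,\lambda^*)/y_0(t^*,\lambda^*) + z$.

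The main obstacle will be the bookkeeping in Item 2.2: one must argue that when the three bullets of Theorem~\ref{t3} fail to all hold simultaneously, a valid $z\in\text{Null}(A-\lambda_{\text{min}}(A)I)$ can still be chosen so that the rank-one lift of $x^* = z_0/y_0 + z$ is an optimal $X^*$ — this means carefully tracking the several sub-cases ($\lambda^*=0$; $i\ge 2$; $i=1$ but the quadratic $\|z_0/y_0 + z\|^2 = \Delta$ in the scalar parameter along $\text{Null}(A-\lambda_{\text{min}}(A)I)$ has a root respecting $b^Tx^*\le c$) and showing each reduces to selecting a root of a one-variable quadratic consistent with complementary slackness. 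Verifying that the complementary slackness conditions from the SDO pair translate exactly into the optimality conditions for eTRS at $x^*$ — in particular that the constraint $y\le 0$ being active ($y^* = \lambda_{\text{min}}(A)<0$) is compatible with $\|x^*\|^2 = \Delta$ — is the delicate point; everything else is a direct computation with $M_0,\dots,M_3$ and the eigenvector structure from Lemmas and Theorems~\ref{t1}--\ref{t3}.
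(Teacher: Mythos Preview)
Your complementary-slackness route for Item~1 is correct but genuinely different from the paper's argument. The paper never invokes the SDO pair (\ref{e4})--(\ref{rela}) here; instead it uses that $r=1$ (Lemma~\ref{le}) makes $k(t,\lambda)$ differentiable at $(t^*,\lambda^*)$, computes $\partial k/\partial t=(\Delta+1)y_0^2-1$ and $\partial k/\partial\lambda=(\Delta+1)y_0\,b^Tz-c$ from~(\ref{sub}), and reads off $\|x^*\|^2=\Delta$ and $b^Tx^*=c$ (or $\le c$ when $\lambda^*=0$) directly from stationarity of $k$. A Lagrangian sandwich $p^*\ge d^*\ge\cdots=x^{*T}Ax^*-2a^Tx^*\ge p^*$ then closes the loop. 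Your route trades the eigenvalue-derivative computation for the observation that a one-dimensional eigenspace forces the SDO optimizer $X^*$ to be rank one; both are valid, and yours is arguably more conceptual.

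For Item~2.2 there is a real gap in your outline. You assert that ``when 2.1 fails, Theorem~\ref{t3} guarantees strong duality,'' having identified $\mu=\lambda^*$. But Theorem~\ref{t3} only supplies \emph{some} $\mu>0$; the failure of the hypotheses in 2.1 rules out the particular choice $\mu=\lambda^*$, not all other values. The paper fills this in (tersely): assuming strong duality fails, Theorem~\ref{t3} gives $\lambda=-\lambda_{\min}(A)$ (forcing multiplicity one and $\lambda_{\min}(A)<0$) and some $\mu>0$. Condition~(iii) gives $a-\tfrac{\mu}{2}b\in\text{Range}(A-\lambda_{\min}(A)I)$, while the eigenvector equation for $D(t^*,\lambda^*)$ gives $(A-\lambda_{\min}(A)I)(z_0/y_0)=a-\tfrac{\lambda^*}{2}b$, so $a-\tfrac{\lambda^*}{2}b$ lies in the same range. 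Subtracting, if $\mu\neq\lambda^*$ then $b\in\text{Range}(A-\lambda_{\min}(A)I)$, hence $b\perp\text{Null}(A-\lambda_{\min}(A)I)$, and the two solutions $x_i=H(\lambda)^\dag(a-\tfrac{\mu}{2}b)+z_i$ in~(iii) satisfy $b^Tx_1=b^Tx_2$, contradicting $(b^Tx_1-c)(b^Tx_2-c)<0$. Hence $\mu=\lambda^*$, which lands us in case~2.1 --- a contradiction with ``otherwise.'' This range argument is the step you are missing; the construction of $z$ that you flag as the main obstacle is actually the easier part, and the paper handles it simply by citing the KKT system and the form of the general solution to $(A-\lambda_{\min}(A)I)x=a-\tfrac{\lambda^*}{2}b$, deferring the explicit choice of $z$ to Section~4.
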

\begin{proof}
\begin{itemize}
\item [1.] By the definition of $y(t^{*},\lambda^{*})$ we have
$$ \begin{bmatrix}
t^{*}& (-a+\frac{\lambda^{*}}{2}b)^T\\
(-a+\frac{\lambda^{*}}{2} b)& A
\end{bmatrix} \begin{bmatrix}
y_0(t^{*},\lambda^{*})\\
z(t^{*},\lambda^{*})
\end{bmatrix} =\lambda_{\text{min}}(D(t^{*},\lambda^{*}))\begin{bmatrix}
y_0(t^{*},\lambda^{*})\\
z(t^{*},\lambda^{*})
\end{bmatrix}, $$
and $||y(t^{*},\lambda^{*})||^2=1$. Expanding these equations gives
\begin{align*}
&t^{*}y_0(t^{*},\lambda^{*})+(-a+\frac{\lambda^{*}}{2}b)^T z(t^{*},\lambda^{*})=\lambda_{\text{min}}(D(t^{*},\lambda^{*}))y_0(t^{*},\lambda^{*}),\\
&y_0(t^{*},\lambda^{*})(-a+\frac{\lambda^{*}}{2}b)+Az(t^{*},\lambda^{*})=\lambda_{\text{min}}(D(t^{*},\lambda^{*}))z(t^{*},\lambda^{*}),\\
& y_0(t^{*},\lambda^{*})^2+z(t^{*},\lambda^{*})^Tz(t^{*},\lambda^{*})=1.
\end{align*}
 Since $\lambda_{\text{min}}(D(t^{*},\lambda^{*}))<\lambda_{\text{min}}(A)$ , then by Lemma \ref{le}, we have $y_0(t^{*},\lambda^{*})\neq 0$. Now set $x^{*}=\frac{z(t^{*},\lambda^{*})}{y_0(t^{*},\lambda^{*})}$. Then
 \begin{align} \label{k1}
 &t^{*}+(-a+\frac{\lambda^{*}}{2}b)^T x^{*}=\lambda_{\text{min}}(D(t^{*},\lambda^{*})),\notag \\
&(A-\lambda_{\text{min}}(D(t^{*},\lambda^{*}))I)x^{*}=a-\frac{\lambda^{*}}{2} b,\\
&||x^{*}||^2=\frac{1-y_0(t^{*},\lambda^{*})^2}{y_0(t^{*},\lambda^{*})^2}. \notag
\end{align}
  Furthermore, since $\lambda_{\text{min}}(D(t^{*},\lambda^{*}))< \lambda_{\text{min}}(A)$, then we have
\begin{align}
A-\lambda_{\text{min}}(D(t^{*},\lambda^{*}))I \succ 0.
\end{align}
Moreover, recall  that, by Lemma \ref{le}, $\lambda_{\text{min}}(D(t^{*},\lambda^{*}))$ has multiplicity  $r= 1$. Hence, the function $k(t,\lambda)$ is differentiable at $(t^{*},\lambda^{*})$. Now let us  consider the following cases.\\
{\bf Case 1.} $\lambda^{*}>0$.\\
In this case, since  $(t^{*},\lambda^{*})$ is also an unconstrained maximizer of $k(t,\lambda)$, it is necessary that
\begin{align*}
&\frac{\partial k(t^{*},\lambda^{*})}{\partial t}= (\Delta +1)y_0(t^{*},\lambda^{*})^2-1=0,\\
& \frac{\partial k(t^{*},\lambda^{*})}{\partial \lambda}= (\Delta +1)y_0(t^{*},\lambda^{*})b^T z(t^{*},\lambda^{*})-c=0.
\end{align*}
Thus $y_0(t^{*},\lambda^{*})^2=\frac{1}{\Delta+1}$ and  consequently,
\begin{align}
 ||x^{*}||^2=\Delta, \quad  b^Tx^{*}=c.
 \end{align}
 Now we are ready to show that in this case strong duality holds for eTRS and $x^{*}$ is a global optimal solution of it. To see this,  we have the following chain of inequalities:
 \begin{align*}
 p^{*}\geq d^{*}&= \max_{\lambda_1,\lambda_2\geq 0}\min_x x^TAx-2a^Tx+\lambda_1(||x||^2-\Delta)+\lambda_2(b^Tx-c)\\
 &\geq \min_x x^TAx-2a^Tx+\lambda_1^{*}(||x||^2-\Delta)+\lambda_2^{*}(b^Tx-c)\\
 &= x^{*^T}Ax^{*}-2a^Tx^{*}+\lambda_1^{*}(||x^{*}||^2-\Delta)+\lambda_2^{*}(b^Tx^{*}-c)\\
 &=x^{*^T}Ax^{*}-2a^Tx^{*}\\
 &\geq p^{*}
 \end{align*}
 where $\lambda_1^{*}:=-\lambda_{\text{min}}(D(t^{*},\lambda^{*}))$,  $\lambda_2^{*}:=\lambda^{*}$ and the first equality follows from the definition of the Lagrangian dual problem, the second equality follows from (5) and (6), the third equality follows from (7) and the last inequality follows from the primal feasibility of $x^{*}$. Therefore,  we have $p^{*}=d^{*}$, i.e., the strong duality holds for eTRS. In particular, we have $x^{*^T}Ax^{*}-2a^Tx^{*}=p^{*}$ and so $x^{*}$ solves eTRS.\\
{\bf Case 2.} $\lambda^{*} =0$. \\In this case, it is necessary that
\begin{align}
&\frac{\partial k(t^{*},\lambda^{*})}{\partial t}= (\Delta +1)y_0(t^{*},\lambda^{*})^2-1=0, \notag \\
&\lim_{(t^{*},\lambda)\longrightarrow (t^{*},\lambda^{*})} \frac{\partial k(t^{*},\lambda)}{\partial\lambda}\leq 0.\notag
\end{align}
This implies that $||x^{*}||^2=\Delta$ and $ b^Tx^{*}\leq c$. It remains to show that  strong duality holds for eTRS and $x^{*}$ solves it. The proof of this case is  similar to  Case 1.
\item[2.]
First notice that, by setting  $x^{*}=\frac{z_0(t^{*},\lambda^{*})}{y_0(t^{*},\lambda^{*})}$,  as discussed in Item 1, we have
\begin{align}
&(A-\lambda_{\text{min}}(D(t^{*},\lambda^{*}))I)x^{*}=a-\frac{\lambda^{*}}{2} b,\\
&(A-\lambda_{\text{min}}(D(t^{*},\lambda^{*}))I) \succeq 0. \notag
\end{align}
\begin{itemize}
\item[2.1.]  The statement immediately follows from Theorem \ref{t3}.
\item[2.2.] To prove the strong duality, we use a contradiction argument. Suppose that the strong duality does not hold for eTRS. Then by Theorem \ref{t3}, there exist multipliers $\lambda, \mu $ such that the following hold:
\begin{itemize}
\item [(i)] $\lambda>0$ and $\mu >0$;
\item [(ii)] $H(\lambda):=A+\lambda I\succeq 0$, and $\text{Rank} (H(\lambda))=n-1$;
\item [(iii)] The system of linear equations, $H(\lambda)x =a-\frac{\mu}{2}b$, has two solutions $x_1$ and $x_2$ satisfying $x_i^Tx_i=\Delta $, $i=1,2$, and $(b^Tx_1-c)(b^Tx_2-c)<0$.
\end{itemize}
Specifically, (ii) implies that $\lambda=-\lambda_{\text{min}}(A)$. Furthermore, it follows from (iii) and (8) that $b\in \text{Range}(A-\lambda_{\text{min}}(A)I)$ and hence, it is orthogonal to eigenspace of $\lambda_{\text{min}}(A)$. Moreover,  we know that the solutions $x_1$ and $x_2$ in (iii) necessarily have the form
$x_1= H(\lambda)^\dag(a-\frac{\mu}{2}b)+z_1$ and $x_2=H(\lambda)^\dag (a-\frac{\mu}{2}b)+z_2$ where $z_i$, for $i=1,2$,   is  an eigenvector corresponding to  $\lambda_{\text{min}}(A)$. This implies that  $b^Tx_1-c=b^Tx_2-c=b^TH(\lambda)^\dag(a-\frac{\mu}{2}b)$, a contradiction to the fact that $(b^Tx_1-c)(b^Tx_2-c)<0$. Therefore, we have the strong duality  for eTRS.
On the other hand, equivalency between dual problem (\ref{eigp}) and the Lagrangian dual problem (\ref{6}) implies that  $\lambda_1^{*}:=-\lambda_{\text{min}}(D(t^{*},\lambda^{*}))$ and $\lambda_2^{*}:=\lambda^{*}$ are optimal Lagrange multipliers corresponding to the norm and the linear constraint of eTRS, respectively. Moreover, since strong duality holds for  eTRS and its Lagrangian dual and both of them are solvable, then  there exist some feasible solutions of eTRS, $x^{**}$, such that with $\lambda_1^{*}$ and $\lambda_2^{*}$ satisfy the following optimality conditions:
 \begin{align}
&(A+\lambda_1^{*} I)x^{**}=a-\frac{\lambda_2^{*}}{2} b, \notag\\
& \lambda_1^{*}(||x^{**}||^2-\Delta)=0, \quad \lambda_2^{*}(b^Tx^{**}-c)=0, \\
&(A+\lambda_1^{*} I)\succeq 0. \notag
\end{align}
Considering (8), we find that necessarily
$x^{**}=x^{*}+z$ where $z$ is an eigenvector corresponding to $\lambda_{\text{min}}(A)$. This   completes the proof. Such vector $z$  can be found by the approach discussed in Section 4.\\
\end{itemize}
\end{itemize}
\end{proof}
As a direct consequence of Theorem \ref{t4}, we deduce the following necessary and sufficient condition for the strong duality of eTRS in terms of an optimal solution of (\ref{eigp}).
\begin{corollary}
 Let $(t^{*},\lambda^{*}) $ be an optimal solution of the dual problem (\ref{eigp}).Then eTRS has no strong duality if and only if  the following hold:
\begin{itemize}
\item [1.] $\lambda_{\text{min}}(D(t^{*},\lambda^{*}))=\lambda_{\text{min}}(A)<0$ and $\lambda^{*}>0$;
\item [2.] $\lambda_{\text{min}}(A) $ has multiplicity one  and
       the linear system of equations, $ (A-\lambda_{\text{min}}(A) I) x= a -\frac{\lambda^{*}}{2}b $ has two solutions $x_1$ and $ x_2$ satisfying $x_i^Tx_i=\Delta$, $i=1,2$ and $(b^Tx_1-c)(b^Tx_2-c)<0$.
\end{itemize}
\end{corollary}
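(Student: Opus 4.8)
The plan is to read the corollary off Theorem \ref{t4} directly. The one preliminary observation needed is that the two regimes appearing in Theorem \ref{t4}, namely $\lambda_{\text{min}}(D(t^{*},\lambda^{*}))<\lambda_{\text{min}}(A)$ and $\lambda_{\text{min}}(D(t^{*},\lambda^{*}))=\lambda_{\text{min}}(A)$, are exhaustive: since $D(t,\lambda)$ is a bordering of $A$, the eigenvalues interlace and $\lambda_{\text{min}}(D(t,\lambda))\le\lambda_{\text{min}}(A)$ for every $(t,\lambda)$. Thus at the optimal pair $(t^{*},\lambda^{*})$ we are in Item 1 or in Item 2 of Theorem \ref{t4}, and in the latter case Item 2 of Lemma \ref{le} supplies a normalized eigenvector of $\lambda_{\text{min}}(D(t^{*},\lambda^{*}))$ with nonzero first component, so the standing hypothesis of Item 2 of Theorem \ref{t4} is always available. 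Theorem \ref{t4} then says: in Item 1 strong duality holds; in Item 2 strong duality holds unless the hypotheses of subcase 2.1 are met, in which case it fails. Consequently eTRS fails to have strong duality exactly when we are in Item 2 and the hypotheses of subcase 2.1 hold.

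For the ``if'' direction I would assume conditions 1 and 2 of the corollary. The equality $\lambda_{\text{min}}(D(t^{*},\lambda^{*}))=\lambda_{\text{min}}(A)$ in condition 1 places us in Item 2 of Theorem \ref{t4}, while the remaining content of conditions 1 and 2, namely that $\lambda_{\text{min}}(A)<0$ has multiplicity one, that $\lambda^{*}>0$, and that $(A-\lambda_{\text{min}}(A)I)x=a-\frac{\lambda^{*}}{2}b$ admits two solutions $x_1,x_2$ with $x_i^Tx_i=\Delta$ and $(b^Tx_1-c)(b^Tx_2-c)<0$, is verbatim the hypothesis of subcase 2.1. Hence eTRS has no strong duality.

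For the ``only if'' direction I would argue by contradiction on the location within the case split. If $\lambda_{\text{min}}(D(t^{*},\lambda^{*}))<\lambda_{\text{min}}(A)$, then Item 1 of Theorem \ref{t4} forces strong duality, contradicting the assumption; so $\lambda_{\text{min}}(D(t^{*},\lambda^{*}))=\lambda_{\text{min}}(A)$ and we are in Item 2. If, within Item 2, the hypotheses of subcase 2.1 failed, then subcase 2.2 would again force strong duality, a contradiction; hence those hypotheses hold, and together with $\lambda_{\text{min}}(D(t^{*},\lambda^{*}))=\lambda_{\text{min}}(A)$ they are exactly conditions 1 and 2. The whole argument is bookkeeping over the dichotomy already established in Theorem \ref{t4}; the only points demanding care are checking that Items 1 and 2 of Theorem \ref{t4} are exhaustive (the interlacing inequality) and that subcases 2.1 and 2.2 are genuinely complementary (which is built into the word ``Otherwise'' in the statement of subcase 2.2). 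I do not expect any substantive obstacle beyond this.
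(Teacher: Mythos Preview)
Your proposal is correct and matches the paper's approach: the paper presents the corollary as ``a direct consequence of Theorem~\ref{t4}'' with no further argument, and your write-up simply makes explicit the case split (via interlacing) and the complementarity of subcases 2.1 and 2.2. The extra care you take in noting that Lemma~\ref{le} supplies the eigenvector with nonzero first component, and that the interlacing bound makes Items~1 and~2 exhaustive, is appropriate and fills in exactly what the paper leaves implicit.
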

\section{Algorithm and Implementation Details}
Section 3  suggests the following algorithm for eTRS.\\
--------------------------------------------------------------------------------------------------------\\
{\bf New Algorithm  } \\
--------------------------------------------------------------------------------------------------------
\begin{enumerate}
\item  Solve  Problem (\ref{eigp}). Let $(t^{*},\lambda^{*})$ be an optimal solution of it.
\item  Compute  $\lambda_{\text{min}}(D(t^{*},\lambda^{*}))$ and corresponding eigenvector
$y(t^{*},\lambda^{*})=[y_0(t^{*},\lambda^{*}), z(t^{*},\lambda^{*})]^T$  with $y_0(t^{*},\lambda^{*})\neq 0$. Set $x^{*}=\frac{z(t^{*},\lambda^{*})}{y_0(t^{*},\lambda^{*})}$.
\item If  $\lambda_{\text{min}}(D(t^{*},\lambda^{*}))<\lambda_{\text{min}}(A)$, then stop and return $x^{*}$  as a global  optimal solution to eTRS; otherwise go to step 4.
 \item If $\lambda_{\text{min}}(A)<0$ has multiplicity one and $\lambda^{*}>0$, define $x_1:=x^{*}+\alpha_1 z$ and $x_2:=x^{*}+\alpha_2 z$ where $\alpha_1$ and $\alpha_2$ are roots of the following quadratic equation
      $$ \alpha^2 +2x^{*^T}z+x^{*^T}x^{*}-\Delta=0,$$
      and $z$ is a normalized eigenvector for $\lambda_{\text{min}}(A)$.
       If $(b^Tx_1-c)(b^Tx_2-c)<0$, then strong duality does not hold for eTRS. So stop and return $k(t^{*},\lambda^{*})$ as a lower bound for eTRS; otherwise go to step 5.

 \item Compute the vector $z$ in Item 2.2 of Theorem \ref{t4} according to the approach described in Subsection 4.1 and return $x^{*}+z$  as a global optimal solution to eTRS.
     \end{enumerate}
 ---------------------------------------------------------------------------------------------------------\\
In what follows,  we discuss implementation details of the  New  Algorithm.
\subsection{How to compute the vector $z$ in item 2.2 of Theorem \ref{t4}}
 For simplicity, we consider the case when $\lambda_{\text{min}}(A)$ has multiplicity, $i\leq 2$. The case where $i>2$ can be handled by the lemma given at the end of this section.

Suppose $y(t^{*},\lambda^{*})=[y_0(t^{*},\lambda^{*}), z_0(t^{*},\lambda^{*})]^T$ and $x^{*}$ are defined as in Item 2 of Theorem \ref{t4} and $\lambda_{\text{min}}(D(t^{*},\lambda^{*}))=\lambda_{\text{min}}(A)$.
To compute the vector $z$ in Item 2.2 of Theorem \ref{t4}, we separately consider the following three cases. The discussion below uses the fact that $||x^{*}||^2\leq \Delta$ (see the Appendix).\\
{\bf Case 1. $\lambda_{\text{min}}(A)$ has multiplicity one and the strong duality holds for eTRS.}
In this case, by Lemma \ref{le}, $\lambda_{\text{min}}(D(t^{*},\lambda^{*}))$ has an orthonormal basis as
\begin{align*}
\left \{ \begin{bmatrix}y_0(t^{*},\lambda^{*})\\z_0(t^{*},\lambda^{*})\end{bmatrix},\begin{bmatrix}0\\z_1(t^{*},\lambda^{*})\end{bmatrix}\right\}.
\end{align*}
  It is not difficult to see that $z_1(t^{*},\lambda^{*})$  forms  an orthonormal basis for  eigenspace of $\lambda_{\text{min}}(A)$ and $x^{*^T}z_1(t^{*},\lambda^{*})=0$. As  strong duality holds,  the optimality conditions (9)  necessarily has a solution of the form $x^{**}=x^{*}+\alpha z_1(t^{*},\lambda^{*})$.  From the equation $||x^{*}+\alpha z_1(t^{*},\lambda^{*})||^2=\Delta$, we obtain $\alpha=\pm\sqrt{\Delta -||x^{*}||^2}$. Then necessarily, at least one of two values of $\alpha$ results in an optimal solution of eTRS.\\
{\bf Case 2. $\lambda_{\text{min}}(A)$ has multiplicity, $i=2$.}\\
In this case, by Lemma \ref{le}, $\lambda_{\text{min}}(D(t^{*},\lambda^{*}))$ has an orthonormal basis as
\begin{align*}
\left \{ \begin{bmatrix}y_0(t^{*},\lambda^{*})\\z_0(t^{*},\lambda^{*})\end{bmatrix},\begin{bmatrix}0\\z_1(t^{*},\lambda^{*})\end{bmatrix},\begin{bmatrix}0\\z_2(t^{*},\lambda^{*})\end{bmatrix}\right\}.
\end{align*}
It is easy to see that,   $z_i(t^{*},\lambda^{*})$, $i=1,2$, form  an orthonormal basis for eigenspace of $\lambda_{\text{min}}(A)$ and we have  $x^{*^T}z_i(t^{*},\lambda^{*})=0$ for $i=1,2$. As  strong duality holds, the optimality conditions (9) has a solution of the form $x^{**}=x^{*}+z$ where $z$ is an eigenvector corresponding to $\lambda_{\text{min}}(A)$. Let us consider the following subcases.\\
{\bf Subcase 1. $\lambda^{*}=0$ and $b^Tx^{*}\leq c$.}\\
First suppose that $b^Tz_i(t^{*},\lambda^{*})\neq 0$, for $i=1,2$.
The following quadratic polynomial equation of variable $\alpha_2$
$$(1+C^2)\alpha_2^2=\Delta-||x^{*}||^2,$$
where $C=\frac{b^Tz_2(t^{*},\lambda^{*})}{b^Tz_1(t^{*},\lambda^{*})}$,
has two real roots. Let $\alpha_2$ be one of them, then $x^{**}=x^{*}-\alpha_2Cz_1(t^{*},\lambda^{*})+\alpha_2z_2(t^{*},\lambda^{*}) $ is a global optimal solution to eTRS since $||x^{**}||^2=\Delta$ and $b^Tx^{**}\leq c$. Next without loss of generality, assume   that $b^Tz_1(t^{*},\lambda^{*})=0$. In this case, obviously, $x^{**}=x^{*}+\alpha z_1(t^{*},\lambda^{*})$ where $\alpha=\pm\sqrt{\Delta -||x^{*}||^2}$, is an optimal solution of eTRS.\\
{\bf Subcase 2.($\lambda^{*}=0 $ and $ b^Tx^{*}>c$) or $\lambda^{*}>0$.}\\
In this case,   the following system of equations with respect to the unknown variables $\alpha_1$ and $\alpha_2$ must have a solution otherwise it is a contradiction with the fact that the optimality conditions is solvable.
\begin{align*}
&\alpha_1^2+\alpha_2^2+||x^{*}||^2=\Delta,\\
&b^Tx^{*}+\alpha_1 b^Tz_1(t^{*},\lambda^{*})+\alpha_2b^Tz_2(t^{*},\lambda^{*})= c,
\end{align*}
which can be easily solved.\\
{\bf Case 3. $\lambda_{\text{min}}(A) $ has multiplicity, $i>2$.}\\
In this case, in view of the following lemma, deflating $A$ by adding a desirable vector $v$, eTRS reduces to the one satisfying Case 2. Then the approach of Case 2 can be applied.

\begin{lemma}
Suppose that $\lambda_{\text{min}}(A)$ has multiplicity, $i>2$. Furthermore, let $u^{*}=(A+\lambda_1^{*}I)^{\dag}(a-\frac{\lambda_2^{*}}{2}b)$  with $||u^{*}||<\Delta$, where $\lambda_1^{*}:=-\lambda_{\text{min}}(A)$ and $\lambda_2^{*}\geq 0$ are the optimal Lagrange multipliers corresponding to the norm and the linear constraint of eTRS, respectively. Moreover, let $v$ be a normalized  eigenvector corresponding to $\lambda_{\text{min}}(A)$ such that $b^Tv=0$.  Then $(x^{*}=u^{*}+z,\lambda_1^{*},\lambda_2^{*})$ where $z\in \text{Null}(A+\lambda_1^{*}I)$ solves eTRS if and only if $(x^{*}=u^{*}+z,\lambda_1^{*},\lambda_2^{*})$ where $z\in \text{Null}(A+\alpha vv^T+\lambda_1^{*}I)$  solves eTRS when $A$ is replaced by $A+\alpha vv^T$ with $\alpha>0$.
\end{lemma}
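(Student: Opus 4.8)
Throughout, ``$(x^{*}=u^{*}+z,\lambda_1^{*},\lambda_2^{*})$ solves eTRS'' is read as: with $x=u^{*}+z$, the optimality system
\begin{align*}
&(A+\lambda_1^{*}I)x=a-\tfrac{\lambda_2^{*}}{2}b,\qquad A+\lambda_1^{*}I\succeq 0,\\
&\|x\|^{2}\le\Delta,\quad b^{T}x\le c,\qquad \lambda_1^{*}\big(\|x\|^{2}-\Delta\big)=0,\quad \lambda_2^{*}\big(b^{T}x-c\big)=0
\end{align*}
holds; these conditions are sufficient for global optimality of eTRS exactly as in the proof of Theorem \ref{t4}, and in the strong-duality case relevant here they are necessary as well. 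The plan is to show that this system, posed with $z\in\text{Null}(A+\lambda_1^{*}I)$, is solvable if and only if the corresponding system for $\tilde A:=A+\alpha vv^{T}$ is solvable with $z\in\text{Null}(\tilde A+\lambda_1^{*}I)$. First I would assemble the structural facts tying the two problems together. Since $v$ is an eigenvector of $A$ for the eigenvalue $\lambda_{\min}(A)=-\lambda_1^{*}$ and $\alpha>0$, the orthogonal splitting $\mathbb{R}^{n}=\text{span}\{v\}\oplus v^{\perp}$ reduces $A$ and $vv^{T}$ simultaneously, which gives $\lambda_{\min}(\tilde A)=\lambda_{\min}(A)$ with multiplicity $i-1\ge2$, $\tilde A+\lambda_1^{*}I=(A+\lambda_1^{*}I)+\alpha vv^{T}\succeq0$, and $\text{Null}(\tilde A+\lambda_1^{*}I)=\text{Null}(A+\lambda_1^{*}I)\cap v^{\perp}$. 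Since $u^{*}\in\text{Range}(A+\lambda_1^{*}I)=\text{Null}(A+\lambda_1^{*}I)^{\perp}$ we get $v^{T}u^{*}=0$, hence $(\tilde A+\lambda_1^{*}I)u^{*}=(A+\lambda_1^{*}I)u^{*}=a-\tfrac{\lambda_2^{*}}{2}b$ (the last equality because $a-\tfrac{\lambda_2^{*}}{2}b\in\text{Range}(A+\lambda_1^{*}I)$, $\lambda_2^{*}$ being an optimal multiplier). Finally, from $b^{T}v=0$ with $v\in\text{Null}(A+\lambda_1^{*}I)$, the orthogonal projection $\bar b$ of $b$ onto $\text{Null}(A+\lambda_1^{*}I)$ already lies in $\text{Null}(\tilde A+\lambda_1^{*}I)$, and $b^{T}w=\bar b^{T}w$ for every $w\in\text{Null}(A+\lambda_1^{*}I)$.

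The direction ``deflated implies original'' is then immediate: if $x=u^{*}+z$ with $z\in\text{Null}(\tilde A+\lambda_1^{*}I)\subseteq\text{Null}(A+\lambda_1^{*}I)$ satisfies the system for $\tilde A$, then $v^{T}x=0$, so $(A+\lambda_1^{*}I)x=(\tilde A+\lambda_1^{*}I)x=a-\tfrac{\lambda_2^{*}}{2}b$, while $A+\lambda_1^{*}I\succeq0$ holds by definition of $\lambda_1^{*}$ and the remaining conditions involve only $\|x\|$ and $b^{T}x$, hence carry over verbatim.

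The converse is the crux. One cannot simply drop the component of $z$ along $v$: when $\lambda_1^{*}>0$ that would break the complementarity $\|u^{*}+z\|^{2}=\Delta$, so the length carried by $v$ must instead be \emph{redistributed} within $\text{Null}(\tilde A+\lambda_1^{*}I)$. Take $\lambda_1^{*},\lambda_2^{*}>0$ (if either one vanishes the matching equality relaxes to an inequality and the construction only gets easier). From the hypothesis, $z\in\text{Null}(A+\lambda_1^{*}I)$ satisfies $\|z\|^{2}=\Delta-\|u^{*}\|^{2}=:\rho^{2}$ and $b^{T}z=c-b^{T}u^{*}=:\gamma$, where $\rho^{2}\ge0$ and we may assume $\rho^{2}>0$ (if $\rho^{2}=0$ then $z=z'=0$ already works). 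Cauchy--Schwarz then gives $\gamma^{2}=(\bar b^{T}z)^{2}\le\|\bar b\|^{2}\rho^{2}$. If $\bar b=0$, then $\gamma=0$ and any $z'\in\text{Null}(\tilde A+\lambda_1^{*}I)$ with $\|z'\|=\rho$ works; if $\bar b\neq0$, I would put $z'=\tfrac{\gamma}{\|\bar b\|^{2}}\bar b+w$ with $w\in\text{Null}(\tilde A+\lambda_1^{*}I)\cap\{\bar b\}^{\perp}$ chosen so that $\|w\|^{2}=\rho^{2}-\gamma^{2}/\|\bar b\|^{2}\ge0$. Such a $w$ exists because
\[
\dim\big(\text{Null}(\tilde A+\lambda_1^{*}I)\cap\{\bar b\}^{\perp}\big)\;\ge\;(i-1)-1\;=\;i-2\;\ge\;1 ,
\]
and this is the single place where the hypothesis $i>2$ is used --- for $i=2$ this space can collapse to $\{0\}$, which is precisely why Case 2 is treated separately. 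By construction $z'\in\text{Null}(\tilde A+\lambda_1^{*}I)$ with $\|z'\|^{2}=\rho^{2}$, $b^{T}z'=\gamma$, and $v^{T}(u^{*}+z')=0$, so a direct check confirms that $(u^{*}+z',\lambda_1^{*},\lambda_2^{*})$ satisfies the system for $\tilde A$: stationarity from $(\tilde A+\lambda_1^{*}I)(u^{*}+z')=(A+\lambda_1^{*}I)u^{*}=a-\tfrac{\lambda_2^{*}}{2}b$, semidefiniteness from $\tilde A+\lambda_1^{*}I\succeq0$, and feasibility and complementarity from $\|u^{*}+z'\|^{2}=\|u^{*}\|^{2}+\rho^{2}=\Delta$ together with $b^{T}(u^{*}+z')=b^{T}u^{*}+\gamma=c$. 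This establishes the equivalence; and since $\lambda_{\min}(\tilde A)=\lambda_{\min}(A)$ now has multiplicity $i-1$, iterating the deflation $i-2$ times reduces the general case to the multiplicity-two setting of Case 2.
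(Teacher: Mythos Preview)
Your proof is correct, but it proceeds by a genuinely different route than the paper's.

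The paper argues non-constructively: since $i>2$, both the original and the deflated problem enjoy strong duality, and a separate proposition (relegated to the Appendix) shows that the optimal Lagrange multipliers $(\lambda_1^{*},\lambda_2^{*})$ are invariant under the deflation $A\mapsto A+\alpha vv^{T}$. Hence the KKT system for the deflated problem must be solvable for \emph{some} $z\in\text{Null}(\tilde A+\lambda_1^{*}I)$; the only explicit computation is the verification that $(\tilde A+\lambda_1^{*}I)^{\dag}(a-\tfrac{\lambda_2^{*}}{2}b)=u^{*}$, done via an eigen-decomposition of $A$ containing $v$ as a column. The reverse direction is dismissed as ``similar''.

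You instead build the correspondence by hand. The deflated-to-original direction is trivial because $\text{Null}(\tilde A+\lambda_1^{*}I)\subset\text{Null}(A+\lambda_1^{*}I)$ and $v^{T}x=0$. For the harder direction you do not appeal to strong duality of the deflated problem at all: you redistribute the $v$-component of $z$ inside $\text{Null}(\tilde A+\lambda_1^{*}I)$ via the projection $\bar b$ and a Cauchy--Schwarz bound, and the dimension count $\dim(\text{Null}(\tilde A+\lambda_1^{*}I)\cap\bar b^{\perp})\ge i-2\ge 1$ is exactly where the hypothesis $i>2$ enters. This is more self-contained (no auxiliary proposition on multiplier invariance is needed) and it makes transparent why $i>2$ is the right threshold; the paper's argument, by contrast, hides that point inside the abstract existence statement coming from strong duality. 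The trade-off is that the paper's approach immediately gives invariance of the \emph{dual} data, which is conceptually cleaner if one is going to iterate the deflation, whereas your argument has to treat the degenerate multiplier cases ($\lambda_1^{*}=0$ or $\lambda_2^{*}=0$) separately --- you wave these off as ``easier'', which is true but would deserve a line each in a polished write-up.
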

\begin{proof}
As $\lambda_{\text{min}}(A)$ has multiplicity, $i>2$,  strong duality holds for eTRS.
First suppose that $(x^{*}=u^{*}+z,\lambda_1^{*},\lambda_2^{*})$ where $z\in \text{Null}(A+\lambda_1^{*}I)$  solves eTRS.  This means that $x^{*}$ with $\lambda_1^{*}$ and $\lambda_2^{*}$ satisfy the following optimality conditions:
\begin{align}
&(A+\lambda_1^{*} I)x^{*}=a-\frac{\lambda_2^{*}}{2} b, \\
& \lambda_1^{*}(||x^{*}||^2-\Delta)=0, \quad \lambda_2^{*}(b^Tx^{*}-c)=0,\notag \\
&(A+\lambda_1^{*} I)\succeq 0. \notag
\end{align}
Next, consider the eTRS with $A$ replaced by $A+\alpha vv^T$ where $v$ is a normalized eigenvector corresponding to $\lambda_{\text{min}}(A)$ such that $b^Tv=0$. We note that such vector $v$ always exists since
$$\text{dim}\, \text{Null}([A-\lambda_{\text{min}}(A)I \quad b]^T)=n-\text{Rank}([A-\lambda_{\text{min}}(A) I\quad  b])\geq 1.$$
Since $\lambda_{\text{min}}(A+\alpha vv^T)=\lambda_{\text{min}}(A)$ still has the multiplicity, $i\geq 2$, then the strong duality  also holds for eTRS with $A+\alpha vv^T$. Furthermore, by  the fact that after deflating $A$,  the optimal Lagrange multipliers, $\lambda_1^{*}$ and $\lambda_2^{*}$ are unchanged (we discuss why the deflating $A$ by adding $\alpha vv^T$ to $A$ does not change the optimal Lagrange multipliers in Appendix),  the following optimality conditions  must have a solution.
\begin{align}
&(A+\alpha vv^T+\lambda_1^{*} I)x^{*}=a-\frac{\lambda_2^{*}}{2} b, \notag\\
& \lambda_1^{*}(||x^{*}||^2-\Delta)=0, \quad \lambda_2^{*}(b^Tx^{*}-c)=0, \notag \\
&(A+\lambda_1^{*} I)\succeq 0. \notag
\end{align}
This implies that $x^{*}=\bar{u}^{*}+z$ where $z\in \text{Null}(A+\alpha vv^T+\lambda_1^{*}I)$ and $\bar{u}^{*}=(A+\alpha vv^T+\lambda_1^{*}I)^{\dag}(a-\frac{\lambda_2^{*}}{2}b)$.
Now let $A=PDP^T$ be the  eigenvalue decomposition of $A$ in which $P$ contains $v$ as its first column. Moreover, let $v_k$, $k=1,..,i$  be the columns of $P$  corresponding to  $\lambda_{\text{min}}(A)$. Since $\lambda_1^{*}=-\lambda_{\text{min}}(A)$, it follows from equation (10) that $v_k^T(a-\frac{\lambda_2^{*}}{2}b)=0$, for $k=1,...,i$. Therefore, we have
\begin{align*}
(A+\alpha vv^T+\lambda_1^{*}I)^{\dag}(a-\frac{\lambda_2^{*}}{2}b)&=P(D+\alpha e_1e_1^T+\lambda_1^{*}I)^{\dag}P^T(a-\frac{\lambda_2^{*}}{2}b)\\
&=P(D+\lambda_1^{*}I)^{\dag}P^T(a-\frac{\lambda_2^{*}}{2}b)=u^{*},
\end{align*}
where $e_1$ is the unit vector, showing $x^{*}=u^{*}+z$ where $z\in \text{Null}(A+\alpha vv^T+\lambda_1^{*}I)$ solves eTRS with $A+\alpha vv^T$. Next assume that $x^{*}=u^{*}+z$ with $z\in \text{Null}(A+\alpha vv^T+\lambda_1^{*}I)$ solves eTRS with $A+\alpha vv^T$. We can show that $(x^{*}=u^{*}+z,\lambda_1^{*},\lambda_2^{*})$ where $z\in \text{Null}(A+\lambda_1^{*}I)$ solves eTRS in a similar manner.
\end{proof}

\subsection{ Solving the dual problem (\ref{eigp}) }
To solve the problem ({\ref{eigp}), we use the alternating optimization approach   which maximize  the underlying problem with respect to one variable while keeping the other one fixed. Precisely, instead of solving the original maximization  problem over two variables, the alternating maximization  algorithm solves a sequence of maximization problems over only  one variable. The method consists of the updates
\begin{align}
\lambda^{k}:&=\text{arg max}_{\lambda\geq 0}\,\, k(t^{k-1},\lambda),\\
t^{k}:&=\,\,\text{arg max}_{t\in \mathbb{R}}\,\, k(t,\lambda^{k}),
\end{align}
where the superscript is the iteration counter.
Since the problem (\ref{eigp}) is a convex optimization problem, it is globally convergent \cite{30}. To solve  subproblem (11), we use the 'fminbnd' command in MATLAB. For subproblem (12), noting that  $\max_{t\in \mathbb{R}} k(t,\lambda^{k})$ is equivalent to the following trust region subproblem
\begin{align}\label{t}
\min \quad & x^TAx-2(a-\frac{\lambda^{k}}{2}b)^Tx  \\
& ||x||^2= \Delta,\notag
\end{align}
 we take advantage of the so-called RW algorithm due to Rendl and Wolkowicz for solving the classical trust region subproblem \cite{12}.
 Suppose that $x(t^k,\lambda^k)$ be an optimal solution of (\ref{t}) obtained from RW algorithm.
As the alternating algorithm proceeds, $(t^{k},\lambda^{k})$ converges to $(t^{*},\lambda^{*})$, an optimal solution of problem (\ref{eigp}). Therefore, by Theorem \ref{t4}, if $\lambda_{\text{min}}(D(t^{*},\lambda^{*}))<\lambda_{\text{min}}(A)$,
then $x(t^k,\lambda^k)$ converges to an optimal solution of eTRS as $(t^{k},\lambda^{k})$ converges to $(t^{*},\lambda^{*})$. Moreover, if $\lambda_{\text{min}}(D(t^{*},\lambda^{*}))=\lambda_{\text{min}}(A)$ and the strong duality is detected, then we are in Step 5 of New Algorithm. Then to compute the vector $z$ in Step 5, one can apply the approach discussed in Subsection 4.1 for $ x(t^k,\lambda^{k})$ at termination of alternating method instead of $x^{*}$ defined in New Algorithm  by considering   a slight modification which is noting that $x(t^{k},\lambda^{k})^Tz_i$ is not necessarily zero.

\section{Numerical Experiments}
In this section, we present some numerical experiments to illustrate the effectiveness of the proposed algorithm. To the best of our knowledge, currently there are  no algorithms in the literature specialized for solving large scale eTRS instances. Thus just for some small instances we compare the New Algorithm with the SOCO/SDO reformulation in \cite{3}. Computations are performed in MATLAB 8.1.0.604 on a 1.70 GHz laptop with 4 GB of RAM. To solve the SOCO/SDO problem, we have used SeDuMi 1.3 and  the maximum number of  iterations for alternating method  is set  to be 2. We consider  two classes of test problems as follows.\\ \\
\begin{itemize}
\item  {\bf First class of test problems:}\\
For this class of test  problems, we generate matrix $A$ with multiple smallest eigenvalue as follows. We first generate randomly a sparse  matrix $ A_0\in \mathbb{R}^{(n-m)\times (n-m)}$  via \verb=A=\verb===\verb=sprandsym(n-m,density)=  where $m$ is the multiplicity of $\lambda_{min}(A)$. Next we construct $A=\begin{bmatrix}A_0&O_{(n-m)\times m}\\O_{m\times (n-m)}& (\lambda_{\text{min}}(A_0)-\alpha)I_{m\times m}\end{bmatrix}$ where $\alpha$ is a positive scalar. It is clear that $A\in \mathbb{R}^{n\times n}$ is a symmetric sparse matrix with $\lambda_{\text{min}}(A)=\lambda_{\text{min}}(A_0)-\alpha $ and its multiplicity is $m$. Furthermore, let $J$ be a permutation of $\{1,2,...,n\}$, then $A:=A(J,J)$ which is obtained from $A$ by permuting the rows and columns of $A$, is still a symmetric sparse matrix with $\lambda_{\text{min}}(A)=\lambda_{\text{min}}(A_0)-\alpha $ and its multiplicity is still $m$. Moreover, the vectors $a$ and $b$ are generated randomly via \verb=a=\verb===\verb=10*randn(n,1)= and  \verb=b=\verb===\verb=10*randn(n,1)=, respectively. Finally, we set $\Delta=1$.  Comparison  with the SOCO/SDO relaxation of eTRS on several small instances which we report  difference between the computed optimal objective values of the New Algorithm and SOCO/SDO relaxation, ($|FvalAlg-FvalSOCO/SDO|$) and algorithm run time in second (Time) at termination averaged over the 10 random instances are given in Table 1. As we see, the New Algorithm finds the global optimal solution significantly faster than the SOCO/SDO reformulation. Moreover, for large instances, to justify the efficiency of  the New Algorithm, we report
 KKT1 ($||(A+\lambda_1^{*}I)x^{*}-(a-\frac{\lambda_2^{*}}{2}b))||_{\infty}$), KKT2 ($\lambda_1^{*}(||x^{*}||^2-\Delta)$),  KKT3 ($\lambda_2^{*}(b^Tx^{*}-c)$) in addition to dimension of problem (n), algorithm run  time in second (Time),  at termination averaged over the 10 random instances in Table 2. We note that, since $m=2$, strong duality holds for this set of problems, and so KKT1, KKT2 and KKT3 are the corresponding optimality conditions.  As we see from Table 2, large instances are also  solved in short time and increase in time in comparison to the increase in the dimension, is significantly smaller.   \\

 \item {\bf The second class of test problems:}\\
    To generate the second class of test problems, matrix $A$ and the vector $a$ are constructed  randomly, the vector $b$ is set to  $e_1$, the unit vector,  and $c=1$ \cite{3}. For this class of  test problems,  there is no evidence for  strong duality property in advance, however, the New Algorithm detected strong duality for all test problems. As the previous case, we compare the results of New Algorithm   with the SOCO/SDO relaxation of \cite{3} for small instances in Table 3 and then solve some large instances in Table 4. Similar to the previous class, for small instances, the New Algorithm  is still much faster than SOCO/SDO relaxation and for large instances it achieves the global optimal solution in short time. Moreover, increase in time in comparison to the increase in the dimension, is significantly smaller.
\end{itemize}
     \begin{table}
\caption{Comparison  between the New Algorithm  and SOCO/SDO reformulation for the first class of test problems with $density=1e-2$ and $m=2$.}
\begin{tabular}{l| l| c| c   }
&&$|FvalAlg-FvalSOCO/SDO|$& Time (s) \\
\hline \hline
 n=200& New Algorithm &$0.1274\times 10^{-8}$&0.76\\
\cline{2-2} \cline{4-4}
& SOCO/SDO &&8.31\\
\hline
 n=300& New Algorithm &$3.7120\times 10^{-9}$&0.95\\
\cline{2-2} \cline{4-4}
& SOCO/SDO &&33.38\\
\hline
 n=400& New Algorithm &$7.1402\times 10^{-9}$&1.04\\
\cline{2-2} \cline{4-4}
& SOCO/SDO &&100.21\\
\hline
 n=500& New Algorithm &$6.1021\times 10^{-8}$&1.08\\
\cline{2-2} \cline{4-4}
& SOCO/SDO &&430.11\\

\hline \hline

\end{tabular}

\end{table}

\begin{center}
\begin{table}
\caption{Computational results of the New Algorithm  for the first class of test problems with $density=1e-4$ and $m=2$.}
\begin{tabular}{l| c c c c   }
n&KKT1&KKT2&KKT3&Time(s)\\
\hline \hline
10000& $1.2302\times 10^{-12}$&$2.3041 \times 10^{-12}$&$1.7125\times 10^{-11}$&2.83\\
\hline
20000& $1.3102\times 10^{-8}$&$3.0943 \times 10^{-12}$&$1.5012\times 10^{-8}$&5.12\\
\hline
40000&$7.9012\times 10^{-13}$&$0.9102\times 10^{-11}$&$7.1360\times 10^{-9}$&11.23\\
\hline
60000&$2.0741\times 10^{-11}$&$0.6541\times 10^{-11}$&$6.1420\times 10^{-9}$&23.19\\
\hline
80000&$0.4307\times 10^{-8}$&$2.6841\times 10^{-11}$&$6.6780\times 10^{-8}$&28.60\\
\hline \hline

\end{tabular}
\end{table}
\end{center}

    \begin{table}
\caption{Comparison  between the New Algorithm  and SOCP/SDO relaxation for the second  class of test problems with $density=1e-2$}
\begin{tabular}{l| l| c| c   }
&&$|FvalAlg-FvalSOCO/SDO|$& Time (s) \\
\hline \hline
 n=200& New Algorithm &$1.3045\times 10^{-10}$&0.99\\
\cline{2-2} \cline{4-4}
& SOCO/SDO &&8.31\\
\hline
 n=300& New Algorithm &$4.4578\times 10^{-8}$&1.1\\
\cline{2-2} \cline{4-4}
& SOCO/SDO &&5.98\\
\hline
 n=400& New Algorithm &$9.0124\times 10^{-9}$&1.4\\
\cline{2-2} \cline{4-4}
& SOCO/SDO &&16.81\\
\hline
 n=500& New Algorithm &$6.1021\times 10^{-8}$&1.5\\
\cline{2-2} \cline{4-4}
& SOCO/SDO &&26.15\\

\hline \hline

\end{tabular}

\end{table}

   \begin{center}
\begin{table}
\caption{Computational results of the New Algorithm  for the second class of test problems with $density=1e-4$.}
\begin{tabular}{l| c c c c   }
n&KKT1&KKT2&KKT3&Time(s)\\
\hline \hline
10000& $3.1045\times 10^{-12}$&$0.4521 \times 10^{-11}$&$1.6325\times 10^{-12}$&3.58\\
\hline
20000& $3.1204\times 10^{-10}$&$0.8120 \times 10^{-12}$&$1.5012\times 10^{-11}$&7.71\\
\hline
40000&$7.9812\times 10^{-9}$&$0.9102\times 10^{-12}$&$7.1360\times 10^{-12}$&19.21\\
\hline
60000&$2.7512\times 10^{-10}$&$7.2361\times 10^{-12}$&$6.6641\times 10^{-12}$&32.42\\
\hline
80000&$0.4407\times 10^{-11}$&$5.2415\times 10^{-11}$&$7.1110\times 10^{-12}$&47.33\\
\hline \hline

\end{tabular}
\end{table}
\end{center}

\section{Conclusions}
In this paper, we  have studied  the problem of minimizing a
general quadratic function subject to an unit ball with an additional linear inequality constraint. We have restated
the necessary and sufficient condition for strong duality  by Ai and Zhang \cite{20} in terms of an optimal solution of a linearly constrained  bivariate concave maximization problem by deriving a new dual problem to eTRS based on a two-parameter linear eigenvalue problem. Moreover, we have proposed an efficient algorithm for solving large sparse eTRS  instances whenever strong duality is detected.
Our computational   experiments on several randomly generated test problems show that the proposed method is
always successful in finding  the global optimal solution of the underlying problem. Moreover, for small instances where we can compare it with the  SOCO/SDO reformulation show that the new approach is much faster than the reformulation.

%
%
%
\section*{Appendix}
\begin{proposition}
 Let $(t^{*},\lambda^{*})$ be an optimal solution of problem (\ref{eigp})  with $\lambda_{\text{min}}(D(t^{*},\lambda^{*}))=\lambda_{\text{min}}(A)$. Moreover, let $y(t^{*},\lambda^{*})=[y_0(t^{*},\lambda^{*}), z_0(t^{*},\lambda^{*})]^T$ be a normalized eigenvector for $\lambda_{\text{min}}(D(t^{*},\lambda^{*}))$ with $y_0(t^{*},\lambda^{*})\neq 0$. Then $||x^{*}||^2\leq \Delta $ where $x^{*}=\frac{z(t^{*},\lambda^{*})}{y_0(t^{*},\lambda^{*})}$.
 \end{proposition}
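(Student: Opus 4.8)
The plan is to reduce the bound to the scalar inequality $y_{0}(t^{*},\lambda^{*})^{2}\ge\frac{1}{\Delta+1}$ and then to obtain that inequality from the fact that $t^{*}$ maximizes the concave (and, here, nonsmooth) function $k(\cdot,\lambda^{*})$. First I would expand the eigenvalue identity $D(t^{*},\lambda^{*})y(t^{*},\lambda^{*})=\lambda_{\text{min}}(D(t^{*},\lambda^{*}))\,y(t^{*},\lambda^{*})$ together with $\|y(t^{*},\lambda^{*})\|^{2}=1$, exactly as in the proof of Theorem~\ref{t4}, Item~1: with $x^{*}=z(t^{*},\lambda^{*})/y_{0}(t^{*},\lambda^{*})$, the second block equation gives $(A-\lambda_{\text{min}}(D(t^{*},\lambda^{*}))I)x^{*}=a-\frac{\lambda^{*}}{2}b$ and the normalization gives $\|x^{*}\|^{2}=(1-y_{0}(t^{*},\lambda^{*})^{2})/y_{0}(t^{*},\lambda^{*})^{2}$. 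Since $\lambda_{\text{min}}(D(t^{*},\lambda^{*}))=\lambda_{\text{min}}(A)$ here, the first identity reads $(A-\lambda_{\text{min}}(A)I)x^{*}=a-\frac{\lambda^{*}}{2}b$, so $a-\frac{\lambda^{*}}{2}b\in\text{Range}(A-\lambda_{\text{min}}(A)I)=\text{Null}(A-\lambda_{\text{min}}(A)I)^{\perp}$; set $u^{*}:=(A-\lambda_{\text{min}}(A)I)^{\dag}(a-\frac{\lambda^{*}}{2}b)$. The normalization identity shows $\|x^{*}\|^{2}\le\Delta\Leftrightarrow y_{0}(t^{*},\lambda^{*})^{2}\ge\frac{1}{\Delta+1}$; moreover, since the first-component equation then holds automatically and $u^{*}\perp\text{Null}(A-\lambda_{\text{min}}(A)I)$, every normalized eigenvector of $D(t^{*},\lambda^{*})$ with nonzero first entry has the form $(1+\|u^{*}+z\|^{2})^{-1/2}\,(1,(u^{*}+z)^{T})^{T}$ with $z\in\text{Null}(A-\lambda_{\text{min}}(A)I)$, so the largest attainable $y_{0}^{2}$ is $(1+\|u^{*}\|^{2})^{-1}$, reached at $z=0$ --- i.e.\ for the eigenvector with nonzero first entry that is orthogonal to those with vanishing first entry in Lemma~\ref{le} (the one the RW algorithm returns), for which $x^{*}=u^{*}$. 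Hence it remains to show $\|u^{*}\|^{2}\le\Delta$.

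Next I would use that $(t^{*},\lambda^{*})$ solves (\ref{eigp}), so $t^{*}$ maximizes the concave function $g(t):=k(t,\lambda^{*})$, and that by Lemma~\ref{le} the equality $\lambda_{\text{min}}(D(t^{*},\lambda^{*}))=\lambda_{\text{min}}(A)$ forces $t^{*}=t_{0}$, the quantity of Lemma~\ref{le}, which is the hard-case threshold for the trust region subproblem with linear term $-2(a-\frac{\lambda^{*}}{2}b)^{T}x$. For $t<t_{0}$ one has $\lambda_{\text{min}}(D(t,\lambda^{*}))<\lambda_{\text{min}}(A)$ with multiplicity one, so $g$ is differentiable there with $g'(t)=(\Delta+1)y_{0}(t)^{2}-1$ (consistent with (\ref{sub})), and, as in the proof of Theorem~\ref{t4}, Item~1, $A-\lambda_{\text{min}}(D(t,\lambda^{*}))I\succ0$ with $z(t)/y_{0}(t)=(A-\lambda_{\text{min}}(D(t,\lambda^{*}))I)^{-1}(a-\frac{\lambda^{*}}{2}b)$. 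Letting $t\uparrow t_{0}$ and using $a-\frac{\lambda^{*}}{2}b\perp\text{Null}(A-\lambda_{\text{min}}(A)I)$, the resolvent stays bounded and $z(t)/y_{0}(t)\to u^{*}$, whence $y_{0}(t)^{2}=(1+\|z(t)/y_{0}(t)\|^{2})^{-1}\to(1+\|u^{*}\|^{2})^{-1}$; in particular the limiting unit eigenvector is the canonical one from the first step.

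Finally, since $g$ is concave and attains its maximum at $t_{0}$, its left derivative there is nonnegative, so $0\le\lim_{t\uparrow t_{0}}g'(t)=(\Delta+1)(1+\|u^{*}\|^{2})^{-1}-1$, which rearranges to $\|u^{*}\|^{2}\le\Delta$. (Equivalently, one may use (\ref{sub}) and note that $0\in\partial k(t_{0},\lambda^{*})$ forces the maximum of $(\Delta+1)v_{0}^{2}-1$ over normalized eigenvectors $(v_{0},w^{T})^{T}$ of $D(t_{0},\lambda^{*})$ to be nonnegative.) Together with the first step this yields $y_{0}(t^{*},\lambda^{*})^{2}=(1+\|u^{*}\|^{2})^{-1}\ge(\Delta+1)^{-1}$, hence $\|x^{*}\|^{2}\le\Delta$. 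The main obstacle is the non-differentiability of $k(\cdot,\lambda^{*})$ at $t^{*}=t_{0}$: the argument has to go through the one-sided derivative (equivalently, through the extreme points of the subdifferential in (\ref{sub})) and must verify that the eigenvector in play is the one for which $x^{*}=u^{*}$; granting this, the bound follows at once from optimality of a maximizer of a concave function.
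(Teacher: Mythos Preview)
Your proof is correct and follows essentially the same route as the paper's: both reduce the claim to the nonnegativity of the left derivative of $k(\cdot,\lambda^{*})$ at $t^{*}=t_{0}$ (the hard-case threshold from Lemma~\ref{le}), which by the normalization identity $\|x^{*}\|^{2}=(1-y_{0}^{2})/y_{0}^{2}$ is equivalent to $\|x^{*}\|^{2}\le\Delta$. The paper simply asserts $k'(t_{0}^{-})=(\Delta+1)y_{0}(t^{*},\lambda^{*})^{2}-1$ and invokes optimality, whereas you make this explicit by introducing $u^{*}=(A-\lambda_{\text{min}}(A)I)^{\dag}(a-\tfrac{\lambda^{*}}{2}b)$, computing the limit of $g'(t)$ as $t\uparrow t_{0}$ via the resolvent, and identifying the limiting eigenvector with the canonical one from Lemma~\ref{le}; this extra care is useful precisely because it pins down \emph{which} eigenvector the one-sided derivative sees, a point the paper leaves implicit.
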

 \begin{proof}
 First notice  that, as discussed in proof of Item 1 of Theorem \ref{t4}, we have
 $$||x^{*}||^2=\frac{1-y_0(t^{*},\lambda^{*})^2}{y_0(t^{*},\lambda^{*})^2}.$$
On the other hand,  since $(t^{*},\lambda^{*})$ is an optimal solution of problem (\ref{eigp}), we have
$$t^{*}\in \text{argmax}_t\quad k(t,\lambda^{*}):= (\Delta+1)\lambda_{\text{min}}(D(t,\lambda^{*}))-t -\lambda^{*} c. $$
Moreover, we know, as shown in proof of Lemma \ref{le}, $t^{*}=t_0$ where $t_0$ is defined as before. The function  $k(t,\lambda^{*})$ is not differentiable at $t_0$ and the directional derivative from left, $k'(t_0^{-})=(\Delta +1)y_0(t^{*},\lambda^{*})^2-1$. The optimality of $t_0$ implies that $k'(t_0^{-})\geq 0$. This proves that $||x^{*}||^2\leq \Delta$.

\end{proof}
\begin{proposition}
Suppose that  $\lambda_{\text{min}}(A)$ has multiplicity, $i>2$ and
 let $\lambda_1^{*}:=-\lambda_{\text{min}}(A)\geq 0$ and $\lambda_2^{*}\geq 0$ be optimal Lagrange multipliers corresponding to the norm and the linear constraint of eTRS, respectively. Then $\lambda_1^{*}$ and $\lambda_2^{*}$ are also the optimal Lagrange multipliers corresponding to the norm and the linear constraint of eTRS with  $A$ replaced by $A+\alpha vv^T$ where $v$ is a normalized eigenvector for $\lambda_{\text{min}}(A)$ such that $v^Tb=0$ and $\alpha >0$.
\end{proposition}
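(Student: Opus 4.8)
The plan is to produce a global minimizer of the deflated problem which, paired with $(\lambda_1^{*},\lambda_2^{*})$, satisfies the usual sufficient optimality conditions for eTRS; since those conditions certify simultaneously that the primal point is globally optimal and that the pair is an optimal set of multipliers, this is exactly what has to be shown.

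First I would collect the structural facts. Since $\lambda_1^{*},\lambda_2^{*}$ are optimal Lagrange multipliers of eTRS, there is a global minimizer $\bar x$ for which
\begin{align*}
(A+\lambda_1^{*}I)\bar x=a-\tfrac{\lambda_2^{*}}{2}b,\qquad \lambda_1^{*}(\|\bar x\|^2-\Delta)=0,\qquad \lambda_2^{*}(b^T\bar x-c)=0,\qquad A+\lambda_1^{*}I\succeq 0 .
\end{align*}
Write $g:=a-\tfrac{\lambda_2^{*}}{2}b$ and $E:=\text{Null}(A+\lambda_1^{*}I)$, which equals the eigenspace of $\lambda_{\text{min}}(A)$ because $\lambda_1^{*}=-\lambda_{\text{min}}(A)$, and has dimension $i\ge 3$. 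Left-multiplying the first identity by an arbitrary vector of $E$ and using symmetry of $A+\lambda_1^{*}I$ shows $g\perp E$ (in particular $v^Tg=0$), so $g\in\text{Range}(A+\lambda_1^{*}I)$; hence $u^{*}:=(A+\lambda_1^{*}I)^{\dag}g$ satisfies $(A+\lambda_1^{*}I)u^{*}=g$ and $u^{*}\in E^{\perp}$, so that $v^Tu^{*}=0$, $\bar z:=\bar x-u^{*}\in E$, and $\|\bar x\|^2=\|u^{*}\|^2+\|\bar z\|^2$.

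The heart of the argument is to find $z'\in E$ with $v^Tz'=0$, $\|z'\|=\|\bar z\|$ and $b^Tz'=b^T\bar z$. Put $S:=E\cap v^{\perp}$; since $i>2$, $\dim S=i-1\ge 2$, so the sphere $\{z\in S:\|z\|=\|\bar z\|\}$ is connected. From $v^Tb=0$ the projection $P_Eb$ lies in $S$, and $b^Tz=(P_Eb)^Tz$ for every $z\in E$, whence $|b^T\bar z|=|(P_Eb)^T\bar z|\le\|P_Eb\|\,\|\bar z\|$ by Cauchy--Schwarz. Consequently the continuous functional $z\mapsto b^Tz$ on that connected sphere attains every value in $[-\|P_Eb\|\,\|\bar z\|,\ \|P_Eb\|\,\|\bar z\|]$, a range containing $b^T\bar z$, and I pick $z'$ realizing it.

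Finally I would set $x':=u^{*}+z'$ and verify everything. Feasibility for the deflated eTRS is automatic, the constraints not involving the objective matrix: $\|x'\|^2=\|u^{*}\|^2+\|z'\|^2=\|\bar x\|^2\le\Delta$ and $b^Tx'=b^Tu^{*}+b^T\bar z=b^T\bar x\le c$. Using $(A+\lambda_1^{*}I)z'=0$ and $v^Tu^{*}=v^Tz'=0$ one obtains $(A+\alpha vv^T+\lambda_1^{*}I)x'=(A+\lambda_1^{*}I)u^{*}=g$, i.e. stationarity of the deflated Lagrangian at $x'$ with multipliers $(\lambda_1^{*},\lambda_2^{*})$; moreover $A+\alpha vv^T+\lambda_1^{*}I\succeq A+\lambda_1^{*}I\succeq 0$, and complementary slackness for $x'$ follows from that of $\bar x$ because $\|x'\|^2-\Delta=\|\bar x\|^2-\Delta$ and $b^Tx'-c=b^T\bar x-c$. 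Since the deflated Lagrangian is convex in $x$ (Hessian $A+\alpha vv^T+\lambda_1^{*}I\succeq 0$), stationarity makes $x'$ its global minimizer, and the displayed feasibility and complementarity then show (by the same weak-duality chain used in the proof of Theorem~\ref{t4}) that $x'$ solves the deflated eTRS with $(\lambda_1^{*},\lambda_2^{*})$ as optimal Lagrange multipliers. The step I expect to be the real obstacle is the construction of $z'$: it is exactly where the hypotheses $i>2$ (so that the relevant sphere is connected) and $v^Tb=0$ (so that $P_Eb\in E\cap v^{\perp}$, which is what makes the Cauchy--Schwarz bound sharp enough) are both needed.
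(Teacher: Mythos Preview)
Your proof is correct and takes a genuinely different route from the paper's. The paper works entirely on the dual side: after noting $v^Ta=v^Tb=0$, it checks that the Lagrangian dual of eTRS and that of the deflated eTRS have the same feasible region and the same objective (since $A+\lambda_1 I\succeq0\Leftrightarrow A+\alpha vv^T+\lambda_1 I\succeq0$, the range conditions on $a-\tfrac{\lambda_2}{2}b$ coincide, and $(A+\lambda_1 I)^{\dag}(a-\tfrac{\lambda_2}{2}b)=(A+\alpha vv^T+\lambda_1 I)^{\dag}(a-\tfrac{\lambda_2}{2}b)$), so the two duals share the same optimal solution $(\lambda_1^{*},\lambda_2^{*})$. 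Your argument is primal and constructive: from a KKT point $\bar x=u^{*}+\bar z$ of the original problem you build a KKT point $x'=u^{*}+z'$ for the deflated one, using an intermediate-value argument on the sphere in $S=E\cap v^{\perp}$ (which needs $\dim S\ge 2$, hence $i>2$) together with $P_Eb\in S$ (from $v^Tb=0$) to match both $\|z'\|$ and $b^Tz'$. The paper's approach is shorter and never needs to exhibit a primal point; yours, on the other hand, actually delivers a global minimizer of the deflated problem, which is exactly the object the algorithm in Section~4 ultimately seeks.
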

\begin{proof}
Let $v$ be a normalized eigenvector for $\lambda_{\text{min}}(A)$ such that $v^Tb=0$. Moreover, let $A=PDP^T$ be the  eigenvalue decomposition of $A$ in which $P$ contains $v$. First,  since $\lambda_1^{*}=-\lambda_{\text{min}}(A)$ and $\lambda_2^{*}$ are the optimal Lagrange multipliers for eTRS, we have
$(a-\frac{\lambda_2^{*}}{2}b)\in \text{Range}(A+\lambda_1^{*} I).$
This implies that $v^T(a-\frac{\lambda_2^{*}}{2}b)=0$  which results in $v^Ta=0$. As strong duality holds for eTRS, we know that  $\lambda_1^{*}$ and $\lambda_2^{*}$ are the optimal solution of the Lagrangian dual of eTRS.  Hence, to prove the statement, it is enough to show that the Lagrangian dual of eTRS is equivalent to the one for eTRS with $A$ replaced by $A+\alpha vv^T$ where $\alpha$ is a positive constant. The Lagrangian dual of eTRS is given by
\begin{align}\label{d}
\max \quad & -(a-\frac{\lambda_2}{2}b)^T(A+\lambda_1 I )^{\dag}(a-\frac{\lambda_2}{2}b)-\lambda_1\Delta-\lambda_2 c\notag\\
& \quad (a-\frac{\lambda_2}{2}b)\in \text{Range}(A+\lambda_1 I),\\
& \quad \quad  A+\lambda_1 I\succeq0, \notag\\
&\quad \quad \lambda_1\geq, \lambda_2\geq 0.\notag
\end{align}
Now let $\lambda_1$ and $\lambda_2$ be feasible for problem (\ref{d}). Then it is easy to verify the following facts:
\begin{align}
&A+\lambda_1 I \succeq 0 \Leftrightarrow A+\alpha vv^T+\lambda_1 I \succeq 0,\\
& (a-\frac{\lambda_2}{2}b)\in \text{Range}(A+\lambda_1 I)\Leftrightarrow (a-\frac{\lambda_2}{2}b)\in \text{Range}(A+\alpha vv^T+\lambda_1 I),\\
&(A+\lambda_1 I )^{\dag}(a-\frac{\lambda_2}{2}b)=(A+\alpha vv^T+\lambda_1 I )^{\dag}(a-\frac{\lambda_2}{2}b).
\end{align}
Equation (17) uses the fact that $v^Ta=v^Tb=0$. It follows from (15),(16) and (17) that $\lambda_1^{*}$ and $\lambda_2^{*}$ are also the optimal Lagrange multipliers  for eTRS with $A+\alpha vv^T$.
\end{proof}

\section{Acknowledgments}The authors would like to thank Prof. Henry Wolkowicz for his useful suggestions. The first author would like to thank University of Guilan for the financial support during his Sabbatical at the University of Waterloo, Canada.

\end{document}